\documentclass{article}
\usepackage[top=1.2in,bottom=1.2in,left=1.2in,right=1.2in,marginpar=1in]{geometry}

\usepackage{graphicx} 

\usepackage{latexsym,amsfonts,amssymb,amsmath,amsthm,mathrsfs}
\usepackage{pinlabel}
\usepackage[hidelinks]{hyperref}
\usepackage{tikz}
\usepackage{todonotes}
\usepackage{comment}
\usepackage{cleveref}

\newcommand{\nc}{\newcommand}
\nc{\dmo}{\DeclareMathOperator}
\nc{\nt}{\newtheorem}

\nt{thm}{Theorem}[section]
\nt{lem}[thm]{Lemma}
\nt{prop}[thm]{Proposition}
\nt{quest}[thm]{Question}
\nt{cor}[thm]{Corollary}
\nt{fact}[thm]{Fact}
\nt{prob}[thm]{Problem}
\nt{conj}[thm]{Conjecture}

\theoremstyle{remark}
\nt{remark}[thm]{Remark}
\nt{claim}{Claim}[thm]

\theoremstyle{definition}
\nt{defn}[thm]{Definition}

\makeatletter
\newtheorem*{rep@theorem}{\rep@title}
\newcommand{\newreptheorem}[2]{%
\newenvironment{rep#1}[1]{%
 \def\rep@title{#2 \ref{##1}}%
 \begin{rep@theorem}}%
 {\end{rep@theorem}}}
\makeatother

\usepackage{thmtools, thm-restate}

\newreptheorem{theorem}{Theorem}

\newenvironment{claimproof}[1]{\par\noindent\emph{Proof:}\space#1}{\hfill $\blacksquare$}

\nc{\Z}{\mathbb{Z}} 
\nc{\R}{\mathbb{R}} 
\nc{\Q}{\mathbb{Q}} 
\nc{\N}{\mathbb{N}} 
\nc{\set}[2]{\{#1 \,|\, #2\} }

\nc{\hg}{\mathcal{H}_g}
\nc{\dg}{\mathcal{D}_g}
\nc{\bddy}{\partial}
\nc{\redsys}{\mathcal{R}}
\nc{\vbddy}{\partial_v}
\nc{\hbddy}{\partial_h}
\nc{\qieq}[1]{\stackrel{#1}{\asymp}} 
\nc{\wit}{\mathbf{W}}
\nc{\lwit}{\mathcal{L}_{\mathbf{W}}}
\nc{\antiwit}{\mathcal{K}_{\mathcal{G}}}
\nc{\mcg}{\operatorname{Mod}(S)}
\nc{\pmcg}{\operatorname{PMod}(S)}
\nc{\g}{\mathcal{G}(S)}
\nc{\crs}{\mathcal{R}}
\nc{\pants}{\mathcal{P}}
\nc{\proj}{\mathcal{L}_{\mathcal{AG}}}
\nc{\ag}{\mathcal{AG}(S)}
\nc{\da}{\mathcal{D}_g'}
\nc{\annuli}{\mathcal{A}}
\nc{\defeq}{\vcentcolon=}

\nc{\textofthreeone}{\textit{Let $\g$ be a multiarc and curve graph with a witness-based distance formula, and suppose a finitely generated group $G\leq \mcg$ acts on $\g$ by isometries. If some pure power $f$ of $h\in G$ restricts to a pseudo-Anosov on a witness $W$ for $\g$, then $h$ acts loxodromically on $\g$.}}

\title{When loxodromics are pseudo-Anosovs on witnesses}
\author{Marissa Chesser}
\date{}

\begin{document}

\maketitle

\begin{abstract}
    In this paper, we prove that for subgroups acting on admissible multiarc and curve graphs and for the handlebody group acting on the disk graph, the loxodromic elements are exactly those for which some pure power is a pseudo-Anosov on a witness. This generalizes the result of Masur and Minsky that the elements of the mapping class group that act loxodromically on the curve graph are the pseudo-Anosov elements.
\end{abstract}

\section{Introduction}\label{sec:intro}

In their notable series of papers on mapping class groups and curve graphs, Masur and Minsky include the following characterization of the elements of the mapping class group acting loxodromically on the curve graph.
\begin{thm}[{{\cite{masur-minsky-I}}}]\label{thm:masur-minsky-original}
    An element of the mapping class group acts loxodromically on the curve graph if and only if it is a pseudo-Anosov element.
\end{thm}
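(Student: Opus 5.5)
The statement has two directions. I will treat them separately, using the Nielsen--Thurston classification and the hyperbolicity of the curve graph $\mathcal{C}(S)$.

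\emph{Non-pseudo-Anosovs are not loxodromic.} Let $f\in\mcg$ be periodic or reducible.
\begin{itemize}
\item If $f$ is periodic, then for any curve $\alpha$ the orbit $\{f^n\alpha\}$ is finite, hence bounded. So $f$ is elliptic.
\item If $f$ is reducible, it fixes a multicurve $\mu$ setwise. Every orbit point $f^n\alpha$ lies within distance $d_{\mathcal{C}}(\alpha,\mu)+1$ of the fixed set of components of $\mu$, which has diameter at most $1$.
\end{itemize}
In both cases orbits are bounded, so the stable translation length $\lim_n d(\alpha,f^n\alpha)/n$ vanishes and $f$ is not loxodromic.

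\emph{Pseudo-Anosovs are loxodromic.} Let $f$ be pseudo-Anosov with stable and unstable measured laminations $\lambda^\pm$. I will produce a linear lower bound $d_{\mathcal{C}}(\alpha,f^n\alpha)\ge cn$ for some $c>0$. The plan follows Masur--Minsky's nesting argument.

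First, the laminations $\lambda^\pm$ fill $S$. Using a train track $\tau$ carrying $\lambda^+$ that is invariant, $f(\tau)\prec\tau$, I obtain a sequence of nested sets
\[
P_0\supset P_1=f(P_0)\supset\cdots,
\]
where $P_k$ is the set of measures carried fully on $f^k(\tau)$ (a neighborhood of $\lambda^+$ in $\mathcal{PML}$).

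The key lemma, which I expect to be the main obstacle, is the following: there is a fixed $N$ such that if a curve $\beta$ is carried by $f^{N}(\tau)$, then every curve disjoint from $\beta$ is carried by $\tau$ (after taking suitable diagonal extensions). Iterating the lemma gives a nesting principle: a curve carried by $f^{kN}(\tau)$ is at distance at least $k$ from any curve not carried by $\tau$. I would prove the lemma by contradiction and compactness. If it failed for all $N$, then limits in $\mathcal{PML}$ would yield a measured lamination disjoint from $\lambda^+$ but not equal to it. This contradicts the fact that $\lambda^+$ is filling and uniquely ergodic, the latter because $\lambda^+$ is the attracting fixed point of the north--south dynamics.

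Granting the lemma, I choose $\alpha$ not carried by $\tau$. Then $f^{kN}\alpha$ is eventually carried by $f^{kN}(\tau)$, which gives $d(\alpha,f^{kN}\alpha)\ge k$. Hence the stable translation length is at least $1/N>0$, and $f$ is loxodromic.

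An alternative route for this direction avoids train tracks. Suppose $f$ is pseudo-Anosov but has bounded orbits on the hyperbolic graph $\mathcal{C}(S)$. Acylindricity would then force a finite orbit. Since there is no explicit acylindricity statement earlier in the paper, however, I would commit to the nesting argument above.
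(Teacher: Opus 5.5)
The paper gives no proof of this statement; it is quoted from Masur--Minsky, so the relevant comparison is with their train-track nesting argument. Your first direction is fine.

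In the second direction, the compactness step itself is sound. If $[\beta_N]\to[\lambda^+]$ and $i(\beta_N,\gamma_N)=0$, every limit of $[\gamma_N]$ has zero intersection with $\lambda^+$, hence equals $[\lambda^+]$. The problem is the train-track bookkeeping around it, which does not close up in three places.
(1) The measures fully carried by $f^k(\tau)$ do not form a neighborhood of $[\lambda^+]$ in $\mathcal{PML}$ unless $\tau$ is maximal. An invariant track of a pseudo-Anosov need not be maximal: a singularity with at least four prongs forces a non-trigon complementary region. This is exactly why Masur--Minsky pass to diagonal extensions.
(2) Your key lemma has hypothesis ``carried by $f^N(\tau)$'' but conclusion ``carried by a diagonal extension of $\tau$'', so it does not iterate. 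After one step you only know the neighbor is carried by a diagonal extension of $f^{(k-1)N}(\tau)$, which is not the hypothesis. Putting diagonal extensions into the hypothesis as well would require $f^N$ of that larger compact set to shrink to $[\lambda^+]$. That means showing $[\lambda^-]$ is not carried by any diagonal extension, which you do not address.
(3) Carrying is $f$-equivariant, so if $\alpha$ is not carried by $\tau$ then $f^{kN}\alpha$ is never carried by $f^{kN}(\tau)$. What you actually need is that $f^{j_0}\alpha$ is carried by (a diagonal extension of) $\tau$ for some fixed $j_0$.

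All three issues disappear if you drop train tracks and run your own argument on open subsets of $\mathcal{PML}$.
Fix an open $U\ni[\lambda^+]$ with $[\lambda^-]\notin\overline{U}$. Your compactness argument gives an open $V\subset U$ containing $[\lambda^+]$ such that every curve disjoint from a curve in $V$ lies in $U$. Uniform north--south dynamics on the compact set $\overline{U}$ gives $N$ with $f^N(\overline{U})\subset V$. Hence, for every $k\ge 1$, the $1$-neighborhood in $\mathcal{C}(S)$ of the curves in $f^{kN}(U)$ lies in $f^{(k-1)N}(U)\subset U$.
Now take a curve $\alpha\notin U$ (curves near $[\lambda^-]$ work). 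Since $[\alpha]\neq[\lambda^-]$, we have $f^{j_0N}\alpha\in U$ for some $j_0$. So every path of length at most $k-j_0$ starting at $f^{kN}\alpha$ stays in $U$. This gives $d(\alpha,f^{kN}\alpha)>k-j_0$, hence positive stable translation length; other basepoints change distances only by a bounded amount.

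Compare this with Masur--Minsky. Their nesting lemma, $\mathcal{N}_1(\operatorname{int}PN(\tau))\subset PN(\tau)$ for large birecurrent tracks, is a direct topological statement. It is part of what lets them bound translation length below uniformly over all pseudo-Anosovs. Your route gives only an $N$ depending on $f$, but that is all this theorem needs.
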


Over the years, other multiarc and curve graphs have been constructed in the process of understanding mapping class groups, and subgroups such as handlebody groups. For some of these graphs, there are mapping classes that act loxodromically but are not pseudo-Anosov. One might then wonder, if there is a way to classify the loxodromics in these other scenarios in a way that is analogous to the characterization of loxodromics of the mapping class for the curve graph. In this paper, we prove such an analogous characterization for the class of \textit{admissible multiarc and curve graphs} (see Definition \ref{def:admissible}) and for \emph{disk graphs}.

The charaterizations provided in this paper use the concept of \emph{witnesses}, introduced by Masur and Schleimer in \cite{MasSchleim} as \emph{holes}. A witness for a graph, whose vertices represent isotopy classes of curves associated to some surface $S$, is an essential, non-pants subsurface of $S$ that must intersect every representative of every isotopy class of curves represented by the vertices of the graph. Importantly, the only witness for the curve graph of a surface is the entire surface. Therefore, Theorem \ref{thm:masur-minsky-original} can be rephrased as follows.

\begin{reptheorem}{thm:masur-minsky-original}[{{\cite{masur-minsky-I}}}]
    An element of the mapping class group acts loxodromically on the curve graph if and only if it is pseudo-Anosov on a witness.
\end{reptheorem}

With this new phrasing, the main theorems of this paper can be seen as direct analogues of this classical result. In particular, in both the setting of admissible multiarc and curve graphs and disk graphs, the elements acting loxodromocally on these graphs are exactly those for which pure power restricts to a pseudo-Anosov on a witness.

In the context of admissible multiarc and curve graphs, the characterization follows from Corollary \ref{cor:pa-to-lox-multi} and Theorem \ref{thm:loximpliespA-multiarccurve}.
\begin{thm}\label{thm:multiarccurve}
    Let $\g$ be a connected, admissible multiarc and curve graph and $G \leq \mcg$ a finitely generated group acting by isometries on $\g$. Then an element $f\in G$ acts loxodromically on $\g$ if and only if (a pure power of) $f$ restricts to a pseudo-Anosov on a witness for the admissible multiarc and curve graph.
\end{thm}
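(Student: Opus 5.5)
The theorem has two directions, and I will prove them separately, using the witness-based distance formula for admissible multiarc and curve graphs. That formula says that, up to uniform multiplicative and additive constants, $d_{\g}(x,y)$ is comparable to $\sum_{W} [d_W(x,y)]_K$. The sum runs over witnesses $W$ for $\g$, $d_W$ denotes subsurface projection distance in the curve graph $\mathcal{C}(W)$, and $K$ is a sufficiently large threshold.

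\textbf{Pseudo-Anosov on a witness implies loxodromic.} This direction is essentially the content of the statement quoted in the macro \texttt{textofthreeone}; I would prove it as follows.
\begin{enumerate}
\item Pass to a pure power $f=h^n$ that preserves the witness $W$ and restricts to a pseudo-Anosov on $W$.
\item By Theorem~\ref{thm:masur-minsky-original} applied to $W$, $f|_W$ acts loxodromically on $\mathcal{C}(W)$. So for a fixed vertex $x$ there is $c>0$ with $d_W(x,f^k x)\ge ck - C$. Here I use that $\pi_W(f^kx)=f^k\pi_W(x)$, since $f$ preserves $W$.
\item The distance formula then gives $d_{\g}(x,f^kx)\ge \frac1A d_W(x,f^kx)-B\ge c'k-B'$, because a single witness term is a lower bound for the sum.
\item Hence the stable translation length of $f$ is positive, so $f$ is loxodromic, and therefore so is $h$, since translation length scales with powers.
\end{enumerate}
Finite generation of $G$ is not used in this direction.

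\textbf{Loxodromic implies pseudo-Anosov on a witness.} I argue by contrapositive, using the Nielsen–Thurston decomposition. Take a pure power $f$ of $h$ with canonical reduction system $\crs$, so that $f$ restricts to each complementary component as the identity or as a pseudo-Anosov. Suppose no pseudo-Anosov component is a witness. I aim to show that $f$ has bounded orbits, or at least zero translation length, and so is not loxodromic. The tool is again the distance formula: bound $d_W(x,f^kx)$ uniformly in $k$ for every witness $W$, and show that only boundedly many witnesses contribute terms above the threshold. I expect this to proceed by cases on how $W$ meets the components.
\begin{itemize}
\item \emph{$W$ is disjoint from, or nested inside, an identity component.} Then $f$ fixes the projection, up to Dehn twist contributions.
\item \emph{$W$ overlaps $\crs$ or a pseudo-Anosov component $Y$.} Since $Y$ is not a witness, there is a vertex of $\g$ disjoint from $Y$. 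Using the bounded geodesic image theorem or the Behrstock inequality, I expect to bound $d_W(x,f^k x)$ by a constant.
\item \emph{$W$ is an annulus about a twisting curve.} Here the twist term grows linearly in $k$. This must be ruled out by showing that such annuli are not witnesses, which is what admissibility should guarantee: witnesses have no annular components, or the graph contains curves disjoint from each reduction curve.
\end{itemize}

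\textbf{Main obstacle.} The hard step is showing that only boundedly many witnesses $W$ have $d_W(x,f^kx)>K$, uniformly in $k$, rather than just bounding each term. My first attempt would be to choose $x$ to be a vertex of $\g$ disjoint from some non-witness pseudo-Anosov component $Y$, or contained in $\crs$ together with the identity pieces. Then $f^kx$ differs from $x$ only inside pieces on which $x$ has no projection, so every witness term vanishes and the orbit is bounded. If no such $x$ is available, I would fall back on showing that the sequence $f^kx$ stays within bounded distance of a fixed vertex, using the connectivity of $\g$ and the fact that $\crs\cup\partial Y$ is $f$-invariant. This would contradict loxodromicity.
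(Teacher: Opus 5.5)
Your first direction is correct and is essentially the paper's proof of Theorem \ref{thm:paimplieslox}: a single witness term bounds the truncated sum from below once it passes the threshold. You are also right that finite generation is not needed there, since $d(x,h^nx)\le |n|\,d(x,hx)$.

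The converse, however, is not proved. Your case analysis is a list of expectations, and the step you call the main obstacle is exactly the missing content: a bound on $\sum_W[d_W(x,f^kx)]_c$ uniform in $k$.

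Your proposed resolution does not work. A vertex disjoint from one non-witness pseudo-Anosov component $Y$ still meets the others. In general no vertex avoids all of them, because a union of non-witness components can be a (disconnected) witness. For instance, take the separating curve graph of a closed genus-$3$ surface. Let disjoint non-separating curves $\alpha,\beta$ cut $S$ into genus-one pieces $Y_1,Y_2$, and let $f$ be pseudo-Anosov on both. Then:
\begin{itemize}
\item neither $Y_i$ is a witness;
\item every separating curve meets $Y_1\cup Y_2$;
\item the only curves fixed by $f$ are $\alpha$ and $\beta$, which are not vertices.
\end{itemize}
So no vertex $x$ is disjoint from the pieces where $f$ acts nontrivially, and your mechanism is unavailable. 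The orbits are in fact bounded, but for a different reason. Moreover, even when $x$ avoids every pseudo-Anosov component, $f$ may twist along reduction curves that $x$ crosses, so $f^kx\neq x$. The ``fall back'' is not an argument.

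The missing idea is that the fixed object should be an $f$-invariant \emph{multicurve} that every witness must cross, not a vertex of $\g$. Take $\crs\cup\pants$, where $\pants$ is a pants decomposition of those identity components that are witnesses. A connected non-annular witness disjoint from $\crs\cup\pants$ would lie in a complementary piece (a pair of pants, or a component that is not a witness), forcing that piece to be a witness.

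The paper turns this into a vertex $\mu_{\crs\cup\pants}$ of Kopreski's model $\lwit$ (Lemma \ref{lem:complement-witnesses}). This vertex is fixed by $f$, and Lemma \ref{lem:finitediam} then bounds the orbit in $\g$.

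You could instead stay inside the distance formula. Write $\mu=\crs\cup\pants$. For every such $W$ one has $d_W(x,f^kx)\le d_W(x,\mu)+d_{f^{-k}W}(\mu,x)$. Re-indexing $W\mapsto f^{-k}W$ then bounds the sum by a multiple of $\sum_W[d_W(x,\mu)]_{c/2}$. This is finite for $c$ large and independent of $k$; disconnected witnesses contribute nothing above the threshold, since their curve graphs are joins.

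Finally, your annulus case rests on a false claim. Admissibility does not exclude annular witnesses: condition (ii) of Definition \ref{def:meets} exists precisely for them, and in a graph of maximal arc systems every non-peripheral annulus is a witness. In that situation a twist about a reduction curve genuinely acts loxodromically. So this case cannot be ruled out; it can only be absorbed by counting twisting on an annular witness as the pseudo-Anosov piece. The paper's Lemma \ref{lem:complement-witnesses} also passes over this point.
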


In context of disk graphs, we have the following characterization, coming from Corollary \ref{cor:pa-to-lox-disk}, Theorem \ref{thm:lox-implies-pa-handlebody}, and Remark \ref{rem:trivial-cases}.
\begin{thm}\label{thm:handlebody}
    For $g\geq 0$, an element $f\in H_g$ acts loxodromically on the disk graph $\dg$ if and only if (a pure power of) $\bddy f$ restricts to a pseudo-Anosov on a witness for the disk graph.
\end{thm}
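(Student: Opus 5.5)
We prove the two directions of Theorem \ref{thm:handlebody} separately, using the structure theory of the disk graph due to Masur--Schleimer. Their results give that $\dg$ is Gromov hyperbolic, that it has a distance formula in terms of subsurface projections to its witnesses (holes), and that witnesses come in known types: the whole boundary surface $\bddy V_g$, compression-body-type holes, and twisted/$I$-bundle holes.

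The trivial low-genus cases are handled directly. For $g\le 1$ the disk graph is empty, a point, or has no loxodromics, and no element is pseudo-Anosov on a witness, so both sides of the equivalence are vacuous. For the rest of the argument assume $g\ge 2$ and pass to a pure power $\bddy f^n$. This is harmless, since loxodromicity is invariant under taking powers.

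\emph{Pseudo-Anosov on a witness implies loxodromic.} Suppose the pure power $\bddy f^n$ restricts to a pseudo-Anosov $\phi$ on a witness $W$. Then $f^n$ preserves $W$. By Masur--Minsky, $\phi$ acts loxodromically on $\mathcal{C}(W)$, so $d_W(D, f^{nk}D)\ge ck-C$ for a disk $D$. Since $W$ is a witness, every disk meets $W$, so the projection $\pi_W$ is defined on all of $\dg$. Masur--Schleimer's Lipschitz/lower bound from the distance formula gives $d_{\dg}(D,f^{nk}D)\ge \frac{1}{A}d_W(D,f^{nk}D)-B$. This yields linear growth, so $f$ is loxodromic. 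This is essentially the same argument as Corollary \ref{cor:pa-to-lox-multi}, run with the disk-graph distance formula.

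\emph{Loxodromic implies pseudo-Anosov on a witness.} Take the Nielsen--Thurston canonical reduction system of $\bddy f^n$ and argue by contrapositive. Suppose no component of the complement of the reduction system on which $\bddy f^n$ is pseudo-Anosov is a witness. We aim to show that $f^n$ has a bounded orbit, or at least sublinear orbit growth, on $\dg$. The distance formula expresses $d_{\dg}(D,f^{nk}D)$ as a sum of thresholded $d_Y$ over witnesses $Y$, so it suffices to bound each term. The witnesses $Y$ split into three cases:
\begin{itemize}
\item If $Y$ is disjoint from the reduction curves, it lies in a component $R$ on which $f^n$ is the identity, or in a pseudo-Anosov component. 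In the pseudo-Anosov case $R$ is not a witness, so $Y\ne R$, and since $Y$ is a proper subsurface of a pseudo-Anosov piece, $d_Y(D,f^{nk}D)$ stays uniformly bounded (by bounded geodesic image).
\item If $Y$ crosses a reduction curve, then $d_Y$ is bounded by standard arguments.
\item The remaining danger is Dehn twists about reduction curves, and annuli are not witnesses. Even so, the distance formula over holes must be checked to see that twisting only contributes logarithmically or boundedly.
\end{itemize}

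The main obstacle is this last direction, because the disk graph is not an admissible multiarc and curve graph in the sense of Definition \ref{def:admissible}. Its holes can overlap, and the distance formula of Masur--Schleimer involves twisted $I$-bundle holes where projection is through a quotient orbifold. I would first try to show directly that if $f^n$ fixes (up to isotopy) a multicurve $\mu$ that is disjoint from some disk $D$ or from a reduction-invariant non-witness component, then $d(D,f^{nk}D)$ is uniformly bounded. To do this, I would use the fact that a non-witness subsurface $R$ has a disk $E$ disjoint from it, chosen so that $f^n$-translates of $E$ remain at bounded distance via a path through disks disjoint from the invariant part. If some non-witness pseudo-Anosov component exists, this boundedness argument, combined with Theorem \ref{thm:loximpliespA-multiarccurve}-style case analysis, would complete the contrapositive.
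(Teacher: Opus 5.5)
Your treatment of $g\le 1$ and of the direction ``pseudo-Anosov on a witness $\Rightarrow$ loxodromic'' matches the paper: it is Theorem \ref{thm:paimplieslox} run with the Masur--Schleimer formula, i.e.\ Corollary \ref{cor:pa-to-lox-disk}. The converse, which is the substance of the theorem, is not proved; the proposal ends with what you ``would first try''.

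The bullet analysis also does not close as written. Bounding each term of the distance formula helps only if the bound is uniform in the witness $Y$ and in $k$, so that the threshold can be chosen above it. Otherwise the number of witnesses above threshold can grow linearly in $k$.
\begin{itemize}
\item Your first bullet does not give such a bound: bounded geodesic image says nothing about the translates $f^{nj}Y$ whose boundaries pass near the axis of $f^n|_R$. The right observation is simply that a subsurface of a non-witness is never a witness, so these terms do not occur at all.
\item For $Y$ meeting a reduction curve $c$, a uniform bound does hold, since $d_Y(D,f^{nk}D)\le d_Y(D,c)+d_{f^{-nk}Y}(c,D)$ and both terms are controlled by $i(\bddy D,c)$. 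You did not say this, but it is routine.
\item The genuinely delicate point is your third bullet, twisting about reduction curves, and you leave it open. Resolving it requires handlebody-specific input. You need that twisting in $\hg$ happens only about disk-bounding curves, which is excluded because the disk would be fixed, or in pairs along incompressible annuli (Theorem \ref{thm:twists}). You also need that such annuli miss some essential disk of $V_g$, which is the outermost-arc surgery of Claim \ref{claim:H-compressible}.
\end{itemize}

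Your fallback idea, taking a disk $E$ disjoint from a non-witness pseudo-Anosov component $R$, does not by itself give a bounded orbit. The disk $E$ may cross twisting reduction curves or lie in another pseudo-Anosov component, so $f^{nk}E$ need not stay near $E$. The trick works only in the form of Lemma \ref{lem:compressible}. There, a component $W$ of $S-n(\crs)$ contains a disk $D$, which forces $W$ to be $f$-invariant and pseudo-Anosov. Any disk missing $W$ is then disjoint from every $f^k(D)$, so $W$ must be a witness.

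When every component is incompressible, there is no disk inside $R$ to play the role of $D$, and the paper needs a new ingredient:
\begin{itemize}
\item cut $V_g$ along the twisting annuli;
\item find a piece $H$ containing an essential disk of $V_g$;
\item observe that $h|_H$ cannot be the identity;
\item apply Oertel's Proposition \ref{prop:oertel-annular-red} to see that $H$ is an $I$-bundle on which $h$ lifts a pseudo-Anosov, with $\hbddy H$ the pseudo-Anosov components;
\item use an outermost-arc argument on $\vbddy H$ (Claim \ref{claim:pa-witness}) to show that every essential disk crosses each component of $\hbddy H$.
\end{itemize}
Neither the twist theorem nor this $I$-bundle structure appears in your proposal. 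Without them, the direction ``loxodromic $\Rightarrow$ pseudo-Anosov on a witness'' is not established.
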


\begin{remark}\label{rem:trivial-cases}
    The cases of Theorem \ref{thm:handlebody} where $g\in \{0,1\}$ are trivial. We note that $V_0$ contains no essential disks, so $\mathcal{D}_0$ is empty. Additionally, $\mathcal{H}_0\simeq 1$. The handlebody $V_1$ has only one isotopy class of essential disks and $\mathcal{H}_1\simeq \Z$, consisting only of powers of Dehn twists about this disk. Thus, when $g\in\{0,1\}$, there are no elements of $\mathcal{H}_g$ acting loxodromically on $\mathcal{D}_g$ and there are no elements of $\mathcal{H}_g$ that can be pseudo-Anosov on a witness for $V_g$. So Theorem \ref{thm:handlebody} is vacuously true in these cases.
\end{remark}

On the way to proving Theorem \ref{thm:multiarccurve} and Theorem \ref{thm:handlebody}, we prove the following theorem.

\newtheorem*{thm:paimplieslox}{Theorem \ref{thm:paimplieslox}}
\begin{thm:paimplieslox}
    \textofthreeone
\end{thm:paimplieslox}

The main tool needed to prove this theorem is a witness-based distance estimate formula (see Definition \ref{def:wit-based-dist}). This theorem likely applies to graphs other than the ones examined in the present paper. In particular, if Schleimer's conjecture \cite[Conjecture 3.10]{notescomplexofcurves} that ``well-behaved'' multiarc and curve graphs are all equipped with a witness-based distance estimate formula, then Theorem \ref{thm:paimplieslox} should hold for these ``well-behaved'' graphs.

We also note that implications in Theorems Theorem \ref{thm:multiarccurve} and Theorem \ref{thm:handlebody} that a a pure power of a loxodromic element must be pseudo-Anosov on a witness are proven separately for admissible multiarc and curve graphs and for disk graphs. The proof for multiarc and curve graphs makes use of a quasi-isometric model for the multiarc and curve graph constructed by Kopreski \cite{kopreski2023multiarc} (a generalization of the graph $\mathcal{K}_{\mathcal{G}}$ of Vokes in \cite{vokes}). This type of model does not result in a quasi-isometric graph in the case of disk graphs, so instead we approach the proof from a more topological perspective. One might wonder if there is a unifying approach that would apply to both the admissible multiarc and curve graph case and the disk graph case.

\textbf{Acknowledgements:} The author would like to thank Aaron Calderon, Jacob Russell, and Chris Leininger for helpful conversations.

\section{Preliminaries}\label{sec:preliminaries}

\subsection{Coarse geometry}\label{subsec:coarse-geom}

Given metric spaces $(X,d_X)$ and $(Y,d_Y)$, a map $f:X\to Y$ is called a \emph{$(K,C)$-quasi-isometric embedding} if there exists $K\geq 1$ and $C\geq 0$ such that for every $a,b\in X$, the following inequality holds:
\[ \frac{1}{K}d_X(a,b) - C \leq d_Y(f(a),f(b)) \leq Kd_X(a,b) +C.\]
We can promote a quasi-isometric embedding to a \emph{$(K,C)$-quasi-isometry} if $f(X)$ is also \emph{$D$-dense} in $Y$, meaning there is a $D\geq 0$ such that every point $y\in Y$ is within distance $D$ of a point in $f(X)$.

We say that a metric space $(X,d_X)$ is a \emph{$G$-space} if it comes equipped with a group action by a group $G$. Given $G$-spaces $(X,d_X)$ and $(Y,d_Y)$, a map $\phi:X\to Y$ is said to be \emph{coarsely $G$-equivariant} if there is a constant $N\geq 0$ such that for any $g\in G$ and $x\in X^{(0)}$, $d(g\cdot \phi(x), \phi(g\cdot x))\leq N$. Given a group $G$, we say that an element $g\in G$ acting on a $\delta$-hyperbolic $G$-space $X$ is \emph{loxodromic} if the orbit map $\Z \to X$, defined by $n\mapsto g^n\cdot x$ for some (any) $x\in X$, is a quasi-isometric embedding.

The following lemma will be useful in translating non-loxodromic actions between $G$-spaces. 

\begin{lem}\label{lem:finitediam}
    Suppose $X$ and $Y$ are $G$-spaces and that $\phi:X\to Y$ is a quasi-isometric embedding that is coarsely $G$-equivariant. Given any base point $x\in X$, suppose $g\in G$ such that the diameter of the set of $g$-translates of $x$ in $X$ is finite. Then the set of $g$ translates of $\phi(x)$ in $Y$ must also have finite diameter.
\end{lem}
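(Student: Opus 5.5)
The argument is a direct chase through the definitions, using the quasi-isometric embedding inequality twice and the coarse equivariance once. Let $K\geq 1$ and $C\geq 0$ be the constants of the $(K,C)$-quasi-isometric embedding $\phi$, and let $N\geq 0$ be the coarse equivariance constant. By hypothesis there is $D<\infty$ with $d_X(g^n\cdot x, g^m\cdot x)\leq D$ for all $n,m\in\Z$; we may take $D$ to be the diameter of the orbit $\{g^n\cdot x\}$. The aim is to show that $\{g^n\cdot \phi(x)\}_{n\in\Z}$ has diameter at most $KD+C+2N$.

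First, for each $n$, coarse equivariance gives
\[ d_Y\big(g^n\cdot\phi(x),\ \phi(g^n\cdot x)\big)\leq N. \]
This requires that the equivariance bound be stated for every group element $g^n$, not just for $g$; that is how it is defined above, with the inequality holding for all elements of $G$. We also need $x\in X^{(0)}$. If $x$ is not a vertex, we replace it by a nearby vertex; this costs only a bounded amount in $X$, since $g$ acts by isometries.

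Second, the upper inequality of the quasi-isometric embedding gives
\[ d_Y\big(\phi(g^n x),\phi(g^m x)\big)\leq K\, d_X(g^n x, g^m x)+C\leq KD+C. \]

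Finally, the triangle inequality combines these:
\[ d_Y\big(g^n\phi(x),g^m\phi(x)\big)\leq N+(KD+C)+N. \]
The right-hand side is a uniform bound independent of $n$ and $m$, which proves the claim.

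The only potential subtlety is the bookkeeping over the quantifiers. We must check that the coarse equivariance constant is uniform over all powers $g^n$, which is what makes the bound independent of $n$. We must also handle base points that are not vertices, as above. Neither step poses a real obstacle. Note that only the upper bound of the quasi-isometric embedding and the coarse equivariance are used; the lower bound is not needed.
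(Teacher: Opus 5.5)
Your proof is correct and takes essentially the same route as the paper: the upper quasi-isometric embedding bound, coarse equivariance applied to each $g^n$, and the triangle inequality. The only cosmetic difference is that you bound the pairwise distances $d_Y(g^n\phi(x), g^m\phi(x))$ directly by $KD+C+2N$, while the paper bounds $d_Y(\phi(x), g^n\phi(x))$ by $AD+B+N$ and then doubles. (Your aside on non-vertex basepoints is not needed where the paper uses the lemma, since the basepoint there is a vertex; transferring the bound from the nearby vertex back to $\phi(x)$ would also require $G$ to act isometrically on $Y$.)
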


\begin{proof}
    Let $A\geq 1$ and $B\geq 0$ be the quasi-isometric constants of $\phi$ and $N\geq 0$ the coarse $G$-equivariance constant. Further, let $D<\infty$ be the diameter of the set of $g$ translates of $x$ in $X$. Then using the quasi-isometric embedding inequality, the triangle inequality, and coarse equivariance, we find for all $n\in \Z$,
    \begin{align*}
        AD +B &\geq A d_X(x, g^n\cdot x) +B \\
        & \geq d_Y(\phi(x), \phi(g^n\cdot x)) \\
        &\geq d_Y(\phi(x), g^n\cdot\phi(x)) - d_Y( g^n\cdot\phi(x), \phi(g^n\cdot x)) \\
        &\geq d_Y(\phi(x), g^n\cdot\phi(x)) -N.
    \end{align*}
    We can then conclude that for all $n\in \Z$,
    \[ AD +B +N \geq d_Y(\phi(x), g^n\cdot\phi(x)), \]
    where here none of the constants $A$, $B$, $D$, and $N$ rely on $n$. Since every $g$ translate of $\phi(x)$ remains withing $AD+B+N$ of $\phi(x)$, it follows that the diameter of the set of $g$ translates of $\phi(x)$ in $Y$ is at most $2(AD+B+N)$.
\end{proof}

\subsection{Graphs and witnesses}\label{subsec:graphs}

In this section, we define the various graphs that will be used in this paper, as well as the notion of a witness. We assume going forward that $S$ is a compact, connected, orientable, non-pants surface possibly with boundary, and with $\chi(\Sigma) \leq -1$.

\begin{defn}\label{def:multiarc-curve}
     A \textit{multiarc and curve graph} $\g$ is a simplicial graph whose vertices correspond to collections of isotopy classes of disjoint, simple, essential arcs and/or curves in $S$. Often, we allow edges between two vertices in a multiarc and curve graph if and only if there are representatives of the vertices that can be realized disjointly; however, we also allow graphs for which adjacent vertices have representatives which can be realized with uniformly bounded geometric intersection.
\end{defn}

\begin{defn}\label{def:witness}
    \emph{Witnesses} for a multiarc and curve graph $\g$ are essential, non-pants subsurfaces $X \subseteq S$ such that every representative of any $\alpha\in\g^{(0)}$ has non-empty intersection with $X$. 
\end{defn}

    The original definition of witnesses in \cite{MasSchleim} (called \emph{holes} in that paper) assumes that the witnesses are connected, and this will be the default assumption in the present paper. However, in some cases discussed, this requirement is dropped, but we will make note when we allow for disconnected witnesses.

One class of multiarc and curve graphs that we will explore in this paper are \textit{admissible} multiarc and curve graphs, as defined by Kopreski \cite{kopreski2023multiarc}, which are generalizations of the class of twist-free multicurve graphs defined by Vokes \cite[Definition 2.1]{vokes}. We note that Kopreski requires disconnected witnesses (see footnote 2 in \cite{kopreski2023multiarc}), though Vokes assumes that witnesses are connected. In this paper we will use the more general structures defined by Kopreski, but the same arguments hold for the class of twist-free multicurve graphs.

\begin{defn}[{{\cite[Definition 1.2]{kopreski2023multiarc}}}]\label{def:admissible}
    A connected multiarc and curve graph $\g$ is \emph{admissible} if
    \begin{enumerate}
        \item[(i)] $\pmcg$ preserves the set of vertices of $\g$ and extends to an action on $\g$, and
        \item[(ii)] if there is an edge between $\alpha,\beta\in\g^{(0)}$, then the geometric intersection $i(\alpha,\beta)$ is uniformly bounded.
    \end{enumerate}
\end{defn}

The other graphs that we will explore in this paper are disk graphs $\dg$ for genus $g$, orientable handlebodys $V_g$.
\begin{defn}\label{defn:disk-graph}
    A properly embedded disk $(D,\bddy D)\subseteq (V_g, \bddy V_g)$ is \emph{essential} if $\bddy D$ is essential in $\bddy V_g$. We define the vertices of the \emph{disk graph} $\dg$ to be the essential disks in $V_g$, up to ambient isotopy preserving $\bddy V_g$. There is an edge between two vertices if there are representatives of the corresponding isotopy classes of disks that are disjoint. 
\end{defn}

\section{Pseudo-Anosov on a witness implies loxodromic}\label{sec:pa-implies-lox}

The proof that a mapping class that is a pseudo-Anosov on a witness implies that it is loxodromic on the appropriate graph requires the use of distance estimate formulas.

\subsection{Distance formulas}\label{subsec:distance-formulas}
In \cite{masur-minsky}, the authors introduce the distance formula for the mapping class group, (or more specifically the marking graph $\mathcal{M}$, which is quasi-isometric to the Cayley graph of the mapping class group). The distance formula as introduced by Masur and Minsky, as well as subsequent distance formulas introduced by others, rely on the tool of subsurface projections to curve graphs. We refer the reader to \cite{masur-minsky} for the details on subsurface projections, but remind the reader that for (multi)-arcs or (multi)-curves $\alpha,\beta$ on a surface $S$, and a subsurface $X\subseteq S$, $\pi_X(\cdot)$ represents the subsurface projection to the curve graph $\mathcal CX$, and the \emph{subsurface projection distance} is defined as follows:
\[ d_X(\alpha, \beta) = \operatorname{diam}(\pi_X(\alpha)\cup \pi_X(\beta)).\]

We note also that when $X$ is allowed to be a disconnected surface, say $X = Y_1 \sqcup Y_2$, we define its curve graph $\mathcal X$ to be the graph join $\mathcal C Y_1 * \mathcal C Y_2$.

To state the Masur-Minsky distance formula, we introduce some additional notation. Given non-negative real numbers $A,B,C$ with $A \geq 1$, we write $B \qieq{A} C$ to mean $\frac{1}{A}B - A \leq C \leq AB+A$, and further define $[B]_A = B$ if $B \geq A$, and $[B]_A = 0$ otherwise. With this notation, the Masur-Minsky distance formula for a surface $S$ can be stated as follows.

\begin{thm}[{{\cite[Theorem 6.12]{masur-minsky}}}]\label{thm:distance-mm}
    There is a constant $C$, depending on $S$, such that for any $c\geq C$, there is a constant $A\geq 1$ such that for any markings $\mu, \nu \in \mathcal{M}^{(0)}$,
    \[ d_{\mathcal{M}}(\mu, \nu) \qieq{A} \sum_{X\in \mathbf{X}} [d_{X}(\mu, \nu)]_c\]
    where $\mathbf{X}$ is the set of all essential subsurfaces of $S$.
\end{thm}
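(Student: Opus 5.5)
The plan is to follow the hierarchy machinery of Masur and Minsky. The two inequalities hidden in $\qieq{A}$ are proved separately. Throughout, $M_0$ denotes the constant of the bounded geodesic image theorem, and thresholds $c\geq C$ are chosen larger than $M_0$ and the other constants appearing below.

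\textbf{Upper bound.} Given markings $\mu,\nu$, build a complete hierarchy $H$ of tight geodesics from $\mu$ to $\nu$. It consists of a main tight geodesic $g_S$ in $\mathcal{C}S$ from $\operatorname{base}(\mu)$ to $\operatorname{base}(\nu)$, together with geodesics $g_Y$ in subsurfaces $Y$ that are component domains of simplices of previously built geodesics.

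\begin{enumerate}
\item Resolve $H$ into a sequence of complete slices. Consecutive slices give markings differing by a bounded number of elementary moves, so $d_{\mathcal M}(\mu,\nu)$ is bounded above, up to multiplicative and additive constants, by the total length $\sum_{g\in H}|g|$.
\item The large link lemma gives two facts. If $d_Y(\mu,\nu)>M_0$, then $Y$ is the domain of some $g_Y\in H$. Conversely, each $g_Y\in H$ satisfies $|g_Y|\asymp d_Y(\mu,\nu)$ up to additive error. Hence geodesics with long domains contribute at most a constant multiple of $\sum_Y [d_Y(\mu,\nu)]_c$.
\item Control the short geodesics, those with $|g_Y|< c$. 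Use the forward/backward subordinacy structure. Every short geodesic is subordinate, within a chain of bounded length (bounded by the complexity $\xi(S)$), to a long geodesic or to $g_S$. Each vertex of a geodesic spawns only boundedly many component domains. So the number of short geodesics is at most a constant times $1+\sum_Y[d_Y(\mu,\nu)]_c$.
\end{enumerate}

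Step 3 is the point I expect to be the main obstacle. The obvious count gives exponential growth along nested chains. To avoid this, I would run an induction on $\xi(Y)$, proving the formula for each $\mathcal{M}(Y)$. The domains with short geodesics nested in a given $Y$ are then charged to the vertices of $g_Y$, with bounded charge per vertex.

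\textbf{Lower bound.} Take a geodesic path $\mu=\mu_0,\dots,\mu_n=\nu$ in $\mathcal M$. One elementary move changes every $\pi_Y$ by a bounded amount, but that alone only bounds $\max_Y d_Y$, not the sum. Instead, for each $Y$ with $d_Y(\mu,\nu)\geq c$, define an active interval $I_Y\subseteq[0,n]$. It is the set of times at which $\partial Y$ is close to the path in $\mathcal{C}$ of the nesting domains, so that outside $I_Y$ the projection $\pi_Y$ is essentially constant, by bounded geodesic image. Consequently $|I_Y|\gtrsim d_Y(\mu,\nu)$.

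The key claim is that each time $i$ lies in only boundedly many intervals $I_Y$ with $d_Y\geq c$, so the sum of the $|I_Y|$ is at most a constant times $n$. I would prove this in two cases.
\begin{itemize}
\item Domains that pairwise overlap are time-ordered by the Behrstock inequality. Of two overlapping domains with large projection, one is ``finished'' before the other starts, so their intervals overlap in bounded length.
\item Domains that are pairwise disjoint or nested form families of size bounded by $\xi(S)$, since $\mu_i$ has bounded length.
\end{itemize}
Summing $|I_Y|$ over all $Y$ then gives $\sum_Y[d_Y(\mu,\nu)]_c\lesssim n$, up to additive error, which is the lower bound.
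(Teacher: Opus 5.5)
The paper gives no proof of this statement; it quotes Masur--Minsky, so the comparison is with their hierarchy argument. Your upper bound follows their route and is sound. You should cite that each domain supports at most one geodesic of $H$, and the Large Link threshold is $M_2$, not $M_0$. The step you flag as the main obstacle is not one. Direct subordinacy strictly lowers complexity, so chains have length at most about $\xi(S)$. A geodesic of length $<c$ has at most $b(c+1)$ direct subordinates, with $b=b(S)$. Hence the short geodesics whose last long (or main) ancestor is $g$ number at most $(|g|+1)\,(b(c+1))^{\xi(S)+2}$. The ``exponential'' factor is therefore a constant, and the naive count already works without induction.

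The genuine gap is in the lower bound. You define $I_Y$ through the shadows $i\mapsto \pi_Z(\mu_i)$ in the nesting domains, and assert that $\pi_Y$ is essentially constant off $I_Y$ ``by bounded geodesic image.'' That theorem controls $\pi_Y$ only along geodesics (or quasi-geodesics) in $\mathcal{C}Z$. The shadow of an arbitrary shortest path in $\mathcal{M}$ is merely a coarsely Lipschitz path. Being an unparametrized quasi-geodesic is a property of hierarchy resolutions, and you cannot use it here, since the lower bound must hold for an arbitrary path. So $|I_Y|\gtrsim d_Y(\mu,\nu)$ is not established, and $I_Y$ need not be an interval, leaving the ``one finishes before the other starts'' step with nothing to act on. The multiplicity count is also incomplete. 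Bounded pairwise overlap for transverse pairs, plus a bound on laminar families, does not by itself bound how many $I_Y$ contain a given time in a mixed family. Also, laminar families are bounded because their boundary curves are pairwise disjoint, not because $\mu_i$ is short.

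The fix is to define activity intrinsically. For $d_Y(\mu,\nu)\ge c$, put $I_Y=\{i : d_Y(\mu_i,\mu)>K \text{ and } d_Y(\mu_i,\nu)>K\}$. An elementary move changes every projection by at most some $k_0$, so $|I_Y|\ge (d_Y(\mu,\nu)-2K)/k_0-1$. Now suppose $Y,Y'$ overlap and $i\in I_Y\cap I_{Y'}$. Behrstock's inequality at $\mu_i$ gives, say, $d_Y(\mu_i,\partial Y')\le B$. For $K\ge 2B$ this forces $d_Y(\mu,\partial Y')>B$ and $d_Y(\nu,\partial Y')>B$. Behrstock at $\mu$ and at $\nu$ then gives $d_{Y'}(\mu,\nu)\le 2B<c$, a contradiction. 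Hence the domains active at any time are pairwise nested or disjoint, so there are at most $N_0(S)$ of them. Summing over times gives $\sum_Y[d_Y(\mu,\nu)]_c\lesssim d_{\mathcal M}(\mu,\nu)+1$. This is not circular, since Behrstock's inequality has a proof independent of the distance formula. It is a genuinely different route from Masur--Minsky, who extract the lower bound from the hierarchy itself by showing that resolutions are quasi-geodesics.
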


There have subsequently been other examples of spaces that come equipped with Masur-Minsky type distance formulas, perhaps most notably hierarchically hyperbolic spaces introduced in \cite{HHSI,HHSII}.

We will focus on multiarc and curve graphs equipped with witness-based distance formulas, as defined below.

\begin{defn}\label{def:wit-based-dist}
    Let $\mathbf{W}$ denotes the set of witnesses for $\g$, a multiarc and curve graph. If there exists some constant $C$ such that for any $c\geq C$, there is a constant $A\geq 1$ such that for all $\alpha,\beta \in \g^{(0)}$,
    \[
        d_{\g}(\alpha,\beta) \qieq{A} \sum_{X\in \mathbf{W}} [d_X(\alpha,\beta)]_c,
    \]
    then we will say that $\g$ is equipped with a \emph{witness-based distance formula}.
\end{defn}

Definition \ref{thm:distance-mm} provides our first example of a multiarc and curve graph equipped with a witness-based distance formula, as all essential, non-pants subsurfaces of a surfaces $S$ are witnesses for the associated marking graph $\mathcal{M}$.

Admissible multiarc and curve graph are also equipped with a witness-based distance formula. This result follows from \cite[Theorem 1.3]{kopreski2023multiarc}, which shows that an admissible multiarc and curve graph is hierarchically hyperbolic with respect to subsurface projections to witnesses, and the distance formula for hierarchically hyperbolic spaces \cite[Theorem 4.5]{HHSII}.

\begin{thm}[{{\cite{kopreski2023multiarc,HHSII}}}]\label{thm:admissible-dist-formula}
    A connected, admissible multiarc and curve graph comes equipped with a witness-based distance formula, assuming disconnected witnesses.
\end{thm}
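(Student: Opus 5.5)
The plan is to obtain the formula by combining two results from the literature: Kopreski's theorem \cite[Theorem 1.3]{kopreski2023multiarc}, which gives a hierarchically hyperbolic structure on $\g$, and the distance formula for hierarchically hyperbolic spaces \cite[Theorem 4.5]{HHSII}. The main work is bookkeeping. We must check that the index set and projections of Kopreski's structure are, up to uniformly bounded error, exactly the witnesses $\mathbf{W}$ (disconnected ones allowed) and the subsurface projections $\pi_X$. We must also check that bounded error does not destroy the thresholded sum in Definition \ref{def:wit-based-dist}.

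\textbf{Step 1: recall Kopreski's structure.} By \cite[Theorem 1.3]{kopreski2023multiarc}, a connected admissible $\g$ is a hierarchically hyperbolic space with the following data:
\begin{itemize}
\item the index set $\mathfrak S$ is the set $\mathbf{W}$ of (possibly disconnected) witnesses;
\item the associated hyperbolic spaces are the curve graphs $\mathcal C X$, taking the join $\mathcal C Y_1 * \mathcal C Y_2$ when $X=Y_1\sqcup Y_2$;
\item the projections are $\pi_X$ restricted to vertices of $\g$;
\item nesting and orthogonality are containment and disjointness of subsurfaces.
\end{itemize}
We also note that $S$ itself is a witness: it meets every essential arc or curve and is not a pants, so it is the $\sqsubseteq$-maximal element. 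If Kopreski's projections differ from the $\pi_X$ of the paper (for instance, using a bounded-diameter coarse map instead), we record a uniform constant $e$ with $|d_X^{\mathrm{HHS}}(\alpha,\beta)-d_X(\alpha,\beta)|\le e$ for all $X\in\mathbf W$ and all vertices $\alpha,\beta$. Disconnected witnesses need no special treatment: their join curve graphs have diameter at most $2$, so their terms vanish once the threshold exceeds $2+e$.

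\textbf{Step 2: apply the HHS distance formula.} \cite[Theorem 4.5]{HHSII} gives $s_0$ such that for every $s\ge s_0$ there are $K,C$ with
\[
d_{\g}(\alpha,\beta)\asymp_{K,C}\sum_{X\in\mathbf W}[d^{\mathrm{HHS}}_X(\alpha,\beta)]_s
\]
for all points $\alpha,\beta$, which we apply to vertices. Enlarging the constant, we may take $A=\max(K,C)$ and write this in the notation $\qieq{A}$.

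\textbf{Step 3: transfer across the bounded error.} This is the expected main obstacle, though it is standard. Set $C=s_0+e$, and for $c\ge C$ compare the two sums termwise:
\[
[d_X]_{c}\le [d^{\mathrm{HHS}}_X]_{c-e}+e\,\mathbf 1_{d^{\mathrm{HHS}}_X\ge c-e},
\qquad
[d_X^{\mathrm{HHS}}]_{c+e}\le [d_X]_{c}+e\,\mathbf 1_{d_X\ge c}.
\]
Each extra $e$ is at most a fixed fraction of the term it accompanies, because that term is at least $c-e\ge s_0>0$. So each thresholded sum is bounded by a constant multiple of the other, taken at a shifted threshold. Both shifted thresholds are at least $s_0$, so Step 2 applies at each. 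Chaining the two resulting estimates gives a constant $A'=A'(c)$ with
\[
d_{\g}(\alpha,\beta)\qieq{A'}\sum_{X\in\mathbf W}[d_X(\alpha,\beta)]_c,
\]
which is exactly the witness-based distance formula of Definition \ref{def:wit-based-dist}, with disconnected witnesses allowed.
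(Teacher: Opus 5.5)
Your proposal is correct and follows the same route as the paper, which derives the statement directly from Kopreski's Theorem 1.3 (hierarchical hyperbolicity of $\g$ with respect to subsurface projections to witnesses) and the HHS distance formula of Behrstock--Hagen--Sisto, Theorem 4.5. Your extra bookkeeping (the uniform projection error $e$, the threshold shifting, and the observation that disconnected witnesses have join curve graphs of diameter at most $2$) is valid but more than the paper needs, since the paper takes Kopreski's projections to be exactly the subsurface projections $\pi_X$.
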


In addition, Masur and Schleimer show that the disk graph $\dg$ is equipped with a witness-based distance formula.

\begin{thm}[{{\cite[Theorem 19.9]{MasSchleim}}}]\label{thm:disk-dist-formula}
    The disk graph $\dg$ for a handlebody $V_g$ comes equipped with a witness-based distance formula.
\end{thm}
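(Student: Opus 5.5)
Since this is \cite[Theorem 19.9]{MasSchleim}, the plan is to follow Masur and Schleimer's axiomatic route. They give a list of axioms on a combinatorial complex whose vertices are (multi)curves on a surface. The axioms require that
\begin{itemize}
  \item the holes, i.e.\ the witnesses, be identified;
  \item the graph admit, between any two vertices, a family of \emph{marked paths} whose shadows in each hole's curve graph are unparametrized quasi-geodesics;
  \item the paths satisfy a combinatorial replacement property and an accessibility property.
\end{itemize}
Any complex satisfying these axioms has a distance estimate of the form in Definition \ref{def:wit-based-dist}, summing thresholded projection distances over holes. So I would first prove that abstract distance formula once, by an induction on the complexity of holes in the style of the hierarchy argument of \cite{masur-minsky}, and then verify the axioms for $\dg$ with $\bddy V_g$ as the ambient surface.

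\textbf{Lower bound.} Adjacent vertices of $\dg$ have disjoint boundaries. Every witness $X$ meets every disk boundary, so $\pi_X$ is defined on all of $\dg^{(0)}$ and is coarsely Lipschitz. A $\dg$-geodesic from $\alpha$ to $\beta$ therefore projects, in each witness, to a path of length at least about $d_X(\alpha,\beta)$. To turn this into a lower bound on the \emph{sum}, I would show that distinct witnesses with large projection are ``visited'' at essentially disjoint times along the geodesic. This uses the Behrstock inequality for overlapping witnesses and the bounded geodesic image theorem for nested ones, exactly as in the lower bound for the marking graph.

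\textbf{Upper bound.} Here the paths are built from disk surgery.
\begin{enumerate}
  \item Between disks $D,E$, surgering $D$ along outermost bigons of $E$ gives a sequence of disks. Its boundaries form an unparametrized quasi-geodesic in $\mathcal C(\bddy V_g)$; this is the quasiconvexity of the disk set (Masur--Minsky).
  \item More generally, I need the surgery sequence to project to a quasi-geodesic in $\mathcal C X$ for every witness $X$. Witnesses come in three types: $\bddy V_g$ itself, compressible holes, and incompressible holes. For compressible holes the argument restricts to disks meeting $X$ essentially. Incompressible holes are where the classification (Masur--Schleimer) shows an $I$-bundle structure, and disks then project to arcs in the base surface. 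In that case one must use the arc graph of the base and the double cover relating it to $X$.
  \item With these paths in hand, I would induct. Whenever the path lingers in a region where boundaries are disjoint from a subsurface $X$, replace that segment by a path inside the witness $X$, which is the replacement axiom. Bounding lengths then gives $d_{\dg}\le A\sum_X[d_X]_c+A$.
\end{enumerate}

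\textbf{Main obstacle.} I expect the hardest part to be the $I$-bundle holes and the failure of the ``holes interfere'' property. Two disjoint holes need not both be avoided by a single disk, so the usual hierarchy splitting into product regions breaks down. The first thing I would try is to show that distinct large-projection $I$-bundle holes are nested or overlapping up to a bounded list of exceptions. I would also check that the surgery paths are quasi-geodesic in the arc complex of the base orbifold, so that the inductive length count still closes.
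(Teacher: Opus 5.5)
The paper does not prove this statement; it imports it from Masur--Schleimer. Your outline follows the same axiomatic route: marked paths with replacement and accessibility, disk-surgery paths, and incompressible holes handled through arcs in the base of an $I$-bundle. As written, though, your lower bound has a genuine gap.

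You get the sum by arguing that witnesses with large projection are visited at essentially disjoint times. For this you use the Behrstock inequality for overlapping pairs and bounded geodesic image for nested pairs. That gives no control over \emph{disjoint} pairs, and $\dg$ has them. Take $F$ compact orientable with nonempty boundary, $\chi(F)\le -1$, $F$ not a pair of pants, and $V_g=F\times I$. Then $F\times\{0\}$ and $F\times\{1\}$ are disjoint witnesses: the complement of either in $\bddy V_g$ retracts onto the other, which is $\pi_1$-injective. A single edge of $\dg$ between vertical disks $\gamma\times I$ and $\delta\times I$, with $\gamma,\delta$ disjoint arcs in $F$, advances both projections at once. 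So the ``exactly as for the marking graph'' step does not go through, because nothing in your argument bounds how many large-projection witnesses one step can advance.

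Your proposed repair is that distinct large-projection $I$-bundle holes are nested or overlapping up to a bounded list of exceptions. This cannot hold as stated. In the same example, let $c\subset F$ be a non-separating curve with $F-c$ not a pair of pants. Then $(F\times\{1\})-n(c\times\{1\})$ is again a witness, disjoint from $F\times\{0\}$. Its complement retracts onto $F\times\{0\}$ together with an annulus about $c\times\{1\}$, and neither carries an essential disk boundary. Now choose arcs $\gamma,\delta$ whose projections to $F-c_1,\dots,F-c_k$ are all large, for suitable $c_i$. This gives $k$ large-projection witnesses, all disjoint from $F\times\{0\}$, with $k$ unbounded.

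What is missing is a structural input about disjoint witnesses of $\dg$:
\begin{itemize}
\item Both must be incompressible, since a compressible one contains a disk boundary missing the other.
\item By the $I$-bundle classification of incompressible holes, they therefore sit in the horizontal boundary of an $I$-bundle.
\item There, a disk's projections to them are coarsely mirror images of each other, both read off from an arc in the base.
\end{itemize}
Reflecting one witness across the fibres turns a disjoint pair into a nested, overlapping, or equal pair. Your ordering count can then be run in one sheet, or in the base, at the cost of a multiplicative constant.

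Your ``main obstacle'' paragraph also misplaces the difficulty. No disk avoids a witness, by definition. Disjoint witnesses are harmless for the upper bound, since a path advancing two of them at once is only shorter.

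On the upper-bound side, your step 3 describes only the witness case of replacement. The sum runs over witnesses alone for a specific reason: when a marked path lingers in a subsurface $Y$ that is \emph{not} a witness, there is a disk missing $Y$ near the path, so that stretch has bounded length in $\dg$. This has to be checked for the disk-surgery paths.
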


\subsection{The proof that pseudo-Anosov on a witness implies loxodromic}
For this section, assume that $G$ is some finitely generated subgroup of the mapping class group $\mcg$. In addition, assume that $\g$ is a multiarc and curve graph equipped with a witness-based distance formula (as described in  Section \ref{subsec:distance-formulas}). Assume further that $G\leq \mcg$ acts by isometries on $\g$. We prove here that for such pairs $(G,\g)$, if an element $h\in G$ restricts to a pseudo-Anosov on some witness for $\g$ (up to a pure power of $h$), then $h$ must act loxodromically on $\g$. This proof is very similar to \cite[Corollary 4.8]{russell2019nonrelative} and \cite[Lemma 7.2]{chesser22}.

\begin{thm}\label{thm:paimplieslox}
    \textofthreeone
\end{thm}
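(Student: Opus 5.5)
We first reduce to the pure power $f=h^k$. If $f$ acts loxodromically on $\g$, so does $h$: for any $x$, $d_{\g}(x,h^{kn}\cdot x)\geq \epsilon |n|$ for some $\epsilon>0$. Since $h$ acts by isometries, $d_{\g}(h^m x, h^{m'}x)$ differs from $d_{\g}(h^{kn}x,h^{kn'}x)$ by at most $2\max_{0\le r<k} d_{\g}(x,h^r x)$ when $m=kn+r$, $m'=kn'+r'$. This gives the lower quasi-isometric bound for $h$. The upper bound $d_{\g}(x,h^m x)\le |m|\, d_{\g}(x,hx)$ is automatic. So it suffices to treat $f$.

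Fix a vertex $\alpha\in\g^{(0)}$ and the witness $W$ on which $f$ restricts to a pseudo-Anosov. Because $W$ is a witness, $\alpha$ meets $W$ essentially, so $\pi_W(\alpha)$ is a nonempty, bounded-diameter subset of $\mathcal{C}W$. (If witnesses are allowed to be disconnected, we use the component on which $f$ is pseudo-Anosov; that component is itself a connected witness-type subsurface, or we apply the join convention.) Since $f$ preserves $W$, naturality of subsurface projection gives $\pi_W(f^n\alpha)=f^n\pi_W(\alpha)$, where $f$ acts on $\mathcal{C}W$ by $f|_W$. By Theorem \ref{thm:masur-minsky-original} applied to $W$, $f|_W$ acts loxodromically on $\mathcal{C}W$. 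Hence there are $\epsilon>0$ and $B\ge0$ with
\[ d_W(\alpha,f^n\alpha)\geq \epsilon |n| - B \]
for all $n$, where the bounded diameter of $\pi_W(\alpha)$ is absorbed into $B$.

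Now we apply the witness-based distance formula (Definition \ref{def:wit-based-dist}) with threshold $c=C$ and its constant $A$. All terms in the sum are nonnegative, so we keep only the $W$ term. Once $|n|$ is large enough that $\epsilon|n|-B\geq c$,
\[ d_{\g}(\alpha,f^n\alpha)\geq \tfrac{1}{A}[d_W(\alpha,f^n\alpha)]_c - A \geq \tfrac{1}{A}(\epsilon|n|-B)-A. \]
For the finitely many small $|n|$ we adjust the additive constant. Combined with the trivial upper bound $d_{\g}(\alpha,f^n\alpha)\le |n|\,d_{\g}(\alpha,f\alpha)$, coming from the isometric action and the triangle inequality, the orbit map $n\mapsto f^n\alpha$ is a quasi-isometric embedding, so $f$ is loxodromic.

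The main subtlety is compatibility: the action of $G$ on $\g$ must be the one induced from $\mcg$ on vertices, so that $f^n\cdot\alpha$ really is the image curve system and equivariance of $\pi_W$ applies. We also need $f$ to preserve $W$, which holds because the power is pure and $f$ restricts to $W$. Finite generation of $G$ plays no essential role here beyond the standing hypotheses. The other point requiring care is the disconnected-witness convention, where $d_W$ is measured in a join. In that case we should instead project to the connected component $Y$ of $W$ on which $f$ is pseudo-Anosov, and note that $d_W\ge$ a coarse multiple of $d_Y$ fails in a join. So we would need $Y$ itself to be counted in $\mathbf{W}$, or an argument that $f$ is pseudo-Anosov on every component. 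We would check which convention the admissible setting uses; for connected witnesses, the argument above goes through directly.
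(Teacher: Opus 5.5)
Your argument is correct and is essentially the paper's proof: you get a linear lower bound on $d_W(\alpha,f^n\alpha)$ from $f|_W$ being pseudo-Anosov, keep only the $W$ term of the witness-based distance formula once it exceeds the threshold, and adjust constants for the finitely many small $|n|$; your triangle-inequality upper bound (which makes finite generation unnecessary) and your explicit passage from $f=h^k$ to $h$ are minor refinements. Your caution about disconnected $W$ is well placed, since a join of curve graphs has diameter at most $2$ and so $d_W$ is bounded there however $f$ acts on the components (your alternative fix of $f$ being pseudo-Anosov on every component would therefore not help), but under the paper's default convention that witnesses are connected this does not affect the statement.
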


\begin{proof}
    The upper bound of the quasi-isometric embedding inequality follows from the fact that orbit maps of finitely generated groups are Lipschitz, (see for instance \cite[Chapter I.8, Lemma 8.18]{BH}).
    
    For the lower bound we use our assumption that $f|_W$ is a pseudo-Anosov. This assumption implies that for any $\beta\in \g^{(0)}$, there is some $K\geq 1$ such that
    \begin{equation}\label{eq:pAinequality}
        \frac{1}{K}|m| - K \leq d_W(\beta,f^m\cdot \beta)
    \end{equation}
    for each $m$. For a large enough $N$, we can therefore conclude that for every $|n|\geq N$, $d_W(\beta,f^n\cdot \beta)$ is greater than the $C$ given in the witness-based distance formula for $\g$ (Definition \ref{def:wit-based-dist}).
    Because $W$ is itself a witness for $\g$ and $C \leq d_W(\beta,f^n\cdot \beta)$ for each $n\geq N$, it follows that
    \begin{equation}\label{eq:suminequality}
        d_W(\beta,f^n\cdot \beta) \leq  \sum_{X\in \mathbf{W}} [d_X(\beta,f^n\cdot \beta)]_C,
    \end{equation}
    where $\mathbf{W}$ is the set of witnesses for $\g$.

    Taking $A\geq 1$ to be the constant as in the witness-based distance formula with $C=c$ as above, we can combine  Equations \ref{eq:pAinequality} and \ref{eq:suminequality} with the distance formula to find:
    \[ \frac{1}{K}|n| - K \leq d_W(\beta,f^n\cdot \beta) \leq \sum_{X\in \mathbf{W}} [d_X(\beta,f^n\cdot \beta)]_c \leq A\cdot d_{\g}(\beta, f^n\cdot \beta)+A  \]
    for any $|n|\geq N$. Hence, the upper bound is satisfied for any $|n|\geq N$ via
    \[ \frac{1}{AK}|n| - \frac{(K+A)}{A} \leq d_{\g}(\beta, f^n\cdot \beta). \]
    Then simply increase the constants until the upper bound is satisfied for the finitely many $m$ such that $0\leq |m| <N$.
    
    We thus conclude that $f$ acts loxodromically on $\g$, and since $f$ is a pure power of $h$, so too must $h$.
    \end{proof}

    \begin{remark}\label{rem:connected}
        Note that the preceding proof works whether the witness-based distance formula is assumed to use connected witnesses or allows for disconnected witnesses. Similarly, $W$ could be assumed to be connected or disconnected.
    \end{remark}

    Theorem \ref{thm:paimplieslox} can be applied to admissible multiarc and curve graphs.
    \begin{cor}\label{cor:pa-to-lox-multi}
        If $\g$ is a connected, admissible multiarc and curve graph, and $h\in \mcg$ such that some pure power restricts to a pseudo-Anosov on a witness for $\g$, then $h$ acts loxodromically on $\g$.
    \end{cor}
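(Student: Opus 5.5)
The plan is to derive the corollary directly from Theorem \ref{thm:paimplieslox}, checking that its hypotheses hold for an admissible graph. Two hypotheses need verification: that $\g$ has a witness-based distance formula, and that the relevant group is a finitely generated subgroup of $\mcg$ acting by isometries on $\g$.

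For the first, I will invoke Theorem \ref{thm:admissible-dist-formula}. It says that a connected admissible multiarc and curve graph carries a witness-based distance formula, provided disconnected witnesses are allowed. Remark \ref{rem:connected} already notes that the proof of Theorem \ref{thm:paimplieslox} works whether or not witnesses are connected. So it applies here, and the witness $W$ on which the pure power of $h$ is pseudo-Anosov may be connected or not.

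For the second, I will use condition (i) of Definition \ref{def:admissible}. It only guarantees that $\pmcg$ acts on $\g$, and this action is simplicial, hence isometric. Since $h$ may not lie in $\pmcg$, I must choose $G$ with some care.

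\begin{itemize}
\item First I pass to a pure power $f = h^k$, which lies in $\pmcg$. By hypothesis, some pure power of $h$ is pseudo-Anosov on a witness, and any further power of it is still pseudo-Anosov on that witness. So I may take $f$ pure and restricting to a pseudo-Anosov on $W$.
\item I take $G = \pmcg$. It is finitely generated, being a finite-index subgroup of the finitely generated group $\mcg$. Alternatively I could take $G = \langle f\rangle$, which is cyclic.
\item Theorem \ref{thm:paimplieslox} applied to $f\in G$ then gives that $f$ acts loxodromically on $\g$.
\end{itemize}

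The main obstacle is making sense of "$h$ acts loxodromically" when $h\notin\pmcg$, since a priori only $\pmcg$ acts. I will read the statement through the $\pmcg$-action, or assume the $\pmcg$ action extends to $\mcg$ as in the intended setting. In either case, if $h$ acts by isometries then its orbit and the orbit of $f=h^k$ differ coarsely only by bounded amounts. Concretely, $d(x,h^n x)$ is within a bounded additive error of $d(x,f^{\lfloor n/k\rfloor}x)$, because $h$ is an isometry and $d(x,h^rx)$ is bounded for $0\le r<k$. Loxodromicity of $f$ therefore passes to $h$, exactly as in the final line of the proof of Theorem \ref{thm:paimplieslox}.
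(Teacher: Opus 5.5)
Your proposal is correct and follows the paper's own proof: you use admissibility condition (i) to get an isometric action of $\pmcg$ on $\g$, take the witness-based distance formula from Theorem \ref{thm:admissible-dist-formula}, and then invoke Theorem \ref{thm:paimplieslox}. Your extra care is sound and goes beyond the paper, which glosses over the possibility that $h\notin\pmcg$: you apply the theorem to the pure power $f\in G=\pmcg$ (finitely generated) and then transfer loxodromicity back to $h$ by the bounded-error comparison of orbits.
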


    \begin{proof}
        By the definition of an admissible multiarc and curve graph, we know that $\pmcg$ preserves the vertices of $\g$. We also know that the action of a mapping class must preserve geometric intersection number, so $\pmcg$ must act by isometries on $\g$.  Theorem \ref{thm:admissible-dist-formula} provides a witness-based distance formula. Therefore, by our assumption on $h$ and  Theorem \ref{thm:paimplieslox}, $h$ must act loxodromically on $\g$.
    \end{proof}

    \begin{remark}
        We note in relation to Corollary Corollary \ref{cor:pa-to-lox-multi}: though Kopreski requires that disconnected witnesses be allowed in order to define the hierarchically hyperbolic structure on the admissible multiarc and curve graphs (see footnote 2 in \cite{kopreski2023multiarc}), as noted in Remark \ref{rem:connected} , there is nothing in the proof of Theorem \ref{thm:paimplieslox} that requires the witness $W$ to be disconnected. Therefore, we can assume, if desired, in the statement of Corollary \ref{cor:pa-to-lox-multi} that the witness $W$ is connected.
    \end{remark}

    Theorem \ref{thm:paimplieslox} can also be applied to the disk graph.
    \begin{cor}\label{cor:pa-to-lox-disk}
        For $g\geq 2$, given $h\in \hg$ if some pure power of $h$ restricts to a pseudo-Anosov on a witness for $\dg$, then $h$ acts loxodromically on $\dg$.
    \end{cor}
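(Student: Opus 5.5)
This corollary follows by applying Theorem \ref{thm:paimplieslox} with $\g = \dg$ and $G = \hg$; what remains is to check its hypotheses. The disk graph is not literally a multiarc and curve graph on $S=\bddy V_g$: its vertices are isotopy classes of essential disks. Taking boundaries, however, identifies each vertex with an essential simple closed curve in $\bddy V_g$. This identification is injective, since two essential disks with isotopic boundaries are isotopic in $V_g$ by the standard irreducibility/innermost-disk argument. Two disks have disjoint representatives exactly when their boundary curves can be realized disjointly, again by an innermost-disk surgery argument. So $\dg$ is the induced subgraph of the curve graph $\mathcal{C}(\bddy V_g)$ on the meridians, which is a multiarc and curve graph in the sense of Definition \ref{def:multiarc-curve}. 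For $g\geq 2$ we have $\chi(\bddy V_g)\leq -2$ and $\bddy V_g$ is not a pants, as the standing assumptions require. Witnesses for $\dg$ are then exactly the witnesses of Definition \ref{def:witness} for this curve system, which is the sense of Masur and Schleimer.

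Next I will check the group action. The handlebody group $\hg$ is finitely generated; this is classical, going back to Suzuki's generating set. Via $f\mapsto \bddy f$ it embeds in $\operatorname{Mod}(\bddy V_g)$, since restriction to the boundary is injective for handlebodies. Each $f\in\hg$ is represented by a homeomorphism of $V_g$, so it carries essential disks to essential disks and disjoint disks to disjoint disks. It therefore acts on $\dg$ by simplicial automorphisms, hence by isometries. In the language of Theorem \ref{thm:paimplieslox}, $G=\hg\leq \operatorname{Mod}(\bddy V_g)$ acts isometrically on $\dg$. Theorem \ref{thm:disk-dist-formula} supplies the witness-based distance formula of Definition \ref{def:wit-based-dist}, with witnesses in the Masur--Schleimer sense, and these agree with ours.

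With these in place, the argument is immediate. Suppose some pure power of $h$, that is, of $\bddy h$, restricts to a pseudo-Anosov on a witness $W$ for $\dg$. Theorem \ref{thm:paimplieslox} then gives that $h$ acts loxodromically on $\dg$. The only step needing any care is the identification of $\dg$ with a subgraph of the curve graph, so that the subsurface projections $d_X$ in the distance formula are the ones used in the proof of Theorem \ref{thm:paimplieslox}. Since Masur--Schleimer's formula is already phrased via projections of disk boundaries, this is mostly bookkeeping. I will also note that Remark \ref{rem:connected} allows $W$ to be connected, matching the Masur--Schleimer convention.
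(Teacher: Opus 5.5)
Your proposal is correct and follows the paper's proof. The paper likewise just applies Theorem~\ref{thm:paimplieslox} to $G=\hg$ acting isometrically on $\dg$, using the witness-based distance formula of Theorem~\ref{thm:disk-dist-formula}; you additionally verify hypotheses the paper leaves implicit, namely finite generation of $\hg$, its embedding in $\operatorname{Mod}(\bddy V_g)$, and the identification of $\dg$ with the subgraph of the curve graph spanned by meridians.
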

    
    \begin{proof}    
        For all $g\geq 2$, $\hg$ is non-trivial and non-cyclic and $\dg$ is infinite. In these cases, since $\hg$ acts by isometries on $\hg$ and has a witness-based distance formula by Theorem \ref{thm:disk-dist-formula}, by our assumption on $h$ and Theorem \ref{thm:paimplieslox}, $h$ must act loxodromically on $\dg$.
    \end{proof}

\section{Loxodromic implies pseudo-Anosov on a witness}\label{sec:lox-implies-pA}

In this section, we prove that loxodromic elements are pseudo-Anosovs on witnesses in the cases of admissible multiarc and curve graphs and disk graphs. The strategies for each case are different. For admissible multiarc and curve graphs, we make use of a particular combinatorial complex constructed by Kopreski (based on a construction of Vokes \cite{kopreski2023multiarc,vokes}). The proof for disk graphs relies more on the topological features of handlebodies, as the constructions of Vokes and Kopreski do not apply to this case.

\subsection{Admissible multiarc and curve graphs}\label{subsec:multiarccurve-proof}

For this section, assume that $\g$ is an admissible multiarc and curve graph for a surface $S$. We record here several definitions and facts from Kopreski \cite{kopreski2023multiarc} that we will use in our proof.

\begin{defn}[{\cite[Definition 2.1]{kopreski2023multiarc}}]\label{def:marking}
    A \emph{marking} $\mu=\{(a_i,t_i)\}$ for a surface $S$ consists of a set of \emph{base curves}, $\operatorname{base}(\mu)=\{a_i\}$, and a set of \emph{transversals}, $\operatorname{trans}(\mu)=\{t_i\}$. The set of base curves $\{a_1,\ldots, a_n\}$ is required to be a non-empty, essential, simple multicurve on $S$. Each transversal $t_i$ is a (possibly empty) diameter-1 subset of the annular curve graph associated the the annulus with core curve $a_i$.
\end{defn}

For the next definition, note that the complexity of a surface $S$ is defined as $\xi(S)=3g+b-3$, where $g$ is the genus and $b$ is the number of boundary components.

\begin{defn}[{\cite[Definition 2.2, Remark 2.3]{kopreski2023multiarc}}]\label{def:clean}
    A marking $\mu=\{(a_i,t_i)\}$ is \emph{locally complete} if whenever $t_i\neq \varnothing$, then the component of $\operatorname{base}(\mu)-\{a_i\}$ that contains $a_i$ has complexity $\xi =1$. We say that $\mu$ is a \emph{locally clean (or just clean)} if it is also the case that whenever $t_i\neq \varnothing$ there is a curve $b_i$ satisfying the following:
    \begin{itemize}
        \item[(i)] the subsurface $F$ filled by $a_i\cup b_i$ has complexity $\xi=1$,
        \item[(ii)] $a_i$ and $b_i$ are adjacent in the curve graph $\mathcal{C}F$,
        \item[(iii)] $b_i\cap a_j=\varnothing$ for $i\neq j$, and
        \item[(iv)] $t_i=\pi_{a_i}(b_i)$, (that is $t_i$ is the projection of $b_i$ to the curve complex of the annulus with core curve $a_i$).
    \end{itemize}
\end{defn}

\begin{defn}[{\cite[Definition 2.7]{kopreski2023multiarc}}]\label{def:meets}
    Let $\mu=\{(a_i,t_i)\}$ be a clean marking on a surface $S$, and let $W\subseteq S$ be an essential (possibly disconnected) subsurface. We say that $\mu$ \emph{meets} $W$ if either
    \begin{enumerate}
        \item[(i)] $W$ intersects $\{a_i\}$, or 
        \item[(ii)] $W$ has an annular component whose core is some $a_i$ with $t_i\neq \varnothing$.
    \end{enumerate}
\end{defn}

A subgraph of the following graph will be needed for our proof of Theorem \ref{thm:loximpliespA-multiarccurve}.

\begin{defn}[{\cite[Definition 2.14]{kopreski2023multiarc}}]\label{def:L-graph}
    Let $\g$ be an admissible multiarc and curve graph, and let $\wit$ be the set of all connected witnesses of $\g$. Define the graph $\lwit$ to be the simplicial graph whose vertices are all clean markings that meet every surface in $\wit$, and for which there is an edge between two markings $\mu,\nu$ exactly when $\mu$ is obtained from $\nu$ by either
    \begin{enumerate}
        \item[(i)] adding or removing a component $(a,t)$,
        \item[(ii)] adding or removing a transversal, or
        \item[(iii)] an elementary move.\footnote{The exact definition of an elementary move will not be relevant to this paper. See \cite[Definition 2.13]{kopreski2023multiarc} for the definition.}
    \end{enumerate}
\end{defn}

The result below follows from Section 2.1.1 and Section 3 of \cite{kopreski2023multiarc}.
\begin{thm}[{\cite{kopreski2023multiarc}}]\label{thm:L-qi}
    Given a connected, admissible multiarc and curve graph $\g$, there is a coarsely $\pmcg$-equivariant quasi-isometry $\phi:\g \to \lwit$.
\end{thm}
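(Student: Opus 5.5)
The plan is to build $\phi$ by hand from bounded intersection, and then to compare the two sides using subsurface projections to witnesses.

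\emph{Construction and equivariance.} For each $\alpha\in\g^{(0)}$, let $\phi(\alpha)$ be a clean marking $\mu$ that meets every connected witness in $\wit$ and satisfies $i(\mu,\alpha)\le R$, for a constant $R$ depending only on $S$ and $\g$. Here $i(\mu,\alpha)$ is measured on base curves together with the curves $b_i$ defining the transversals. Such a marking exists for a suitable $R$. There are only finitely many $\pmcg$-orbits of topological types of pairs $(\alpha,\mu)$ with $\mu$ a clean marking meeting every witness. Indeed, $\alpha$ already meets every witness, so one can complete $\alpha$, or a curve built from a neighbourhood of $\alpha\cup\bddy S$, to a marking meeting all witnesses with intersection bounded in terms of the topological type.

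Any two choices $\mu,\mu'$ for the same $\alpha$ have $i(\mu,\mu')$ bounded. Bounded intersection gives bounded subsurface projection distance in every $\mathcal CX$, and bounded distance in $\lwit$ via its distance formula (below). Hence the choice is coarsely well defined. Since $h\cdot\phi(\alpha)$ is a legitimate choice for $h\cdot\alpha$, $\phi$ is coarsely $\pmcg$-equivariant.

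\emph{Projections agree coarsely.} The key estimate is that for every witness $W$ (connected or disconnected),
\[ d_W(\pi_W(\alpha),\pi_W(\phi(\alpha)))\le E \]
for a uniform $E$. This follows because $\alpha$ and $\phi(\alpha)$ both meet $W$ and have intersection at most $R$, so their projections to $\mathcal CW$ are uniformly close. The transversals handle the annular case, via condition (ii) of Definition~\ref{def:meets}. By the triangle inequality in each $\mathcal CW$, $d_W(\alpha,\beta)$ and $d_W(\phi(\alpha),\phi(\beta))$ differ by at most $2E$.

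\emph{Quasi-isometric embedding and density.} Both $\g$ (by Theorem~\ref{thm:admissible-dist-formula}) and $\lwit$ carry distance formulas summing $[d_W(\cdot,\cdot)]_c$ over the same witness set. For $\lwit$ this is the hierarchically hyperbolic structure from \cite{kopreski2023multiarc} together with \cite[Theorem 4.5]{HHSII}. Take the threshold $c$ larger than the thresholds for both formulas plus $2E$. Then each term of one sum is coarsely bounded above and below by the corresponding term of the other sum at a slightly shifted threshold, and the standard threshold-change argument gives $d_{\lwit}(\phi\alpha,\phi\beta)\qieq{A'} d_{\g}(\alpha,\beta)$.

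For coarse density, reverse the construction. Given a clean marking $\mu$ meeting all witnesses, use the finitely many $\pmcg$-orbit types of vertices of $\g$ (property (i) of Definition~\ref{def:admissible}) to pick $\alpha\in\g^{(0)}$ with $i(\alpha,\mu)$ uniformly bounded. Then $\phi(\alpha)$ and $\mu$ have uniformly close projections to every witness, and the distance formula in $\lwit$ bounds $d_{\lwit}(\phi(\alpha),\mu)$.

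\emph{Main obstacle.} I expect the hardest step to be uniformly bounding the intersection in the density step. One needs a vertex of $\g$ whose intersection with an arbitrary admissible marking is bounded independently of the marking. I would try to obtain this by cocompactness: $\pmcg$ acts on clean markings with finitely many orbits, up to the bounded ambiguity of transversals. So fix one vertex $\alpha_j$ for each orbit representative $\mu_j$, and transport it by the mapping class $h$ with $\mu=h\mu_j$, setting $\alpha=h\alpha_j$. Then $i(\alpha,\mu)=i(\alpha_j,\mu_j)$, which is bounded independently of $\mu$.
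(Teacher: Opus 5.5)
\paragraph{Verdict: genuine gap (circularity).} The paper gives no proof of this statement. It imports it from Sections 2.1.1 and 3 of \cite{kopreski2023multiarc}, where this quasi-isometry is the bridge that carries the hierarchical structure over to $\g$.

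Your proposal derives the quasi-isometry from Masur--Schleimer type distance formulas on \emph{both} $\g$ and $\lwit$, and neither is available as an input. The formula for $\g$ (Theorem~\ref{thm:admissible-dist-formula}) comes from Kopreski's hierarchical structure. Kopreski builds that structure on a marking graph with disconnected witnesses and transports it to $\g$ along an equivariant quasi-isometry of exactly the kind you are constructing.

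For $\lwit$, which uses only connected witnesses, there is no independent hierarchical structure. In general it is strictly larger than $\mathcal{L}_{\mathscr{X}}$. For example, take the separating curve graph of a closed surface of genus at least $3$, and two non-separating curves that jointly cut $S$ into two pieces of positive genus. With empty transversals they form a vertex of $\lwit$ by Lemma~\ref{lem:complement-witnesses}. That vertex misses the disconnected witness made of the two pieces, which also refutes your assertion that $\phi(\alpha)$ meets every disconnected witness.

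Concretely, the two directions of your argument each rest on a hard half of an unavailable formula:
\begin{itemize}
\item Your lower bound $d_{\lwit}(\phi\alpha,\phi\beta)\gtrsim d_{\g}(\alpha,\beta)$ uses the hard half $d_{\g}\lesssim\sum_W[d_W]_c$ of the formula on $\g$.
\item Your coarse well-definedness, upper bound and density use the hard half of a formula on $\lwit$. That formula also tacitly contains the connectivity of $\lwit$, which you never address.
\end{itemize}
Once both formulas are granted, the theorem is nearly formal, as your threshold argument correctly shows. So the proposal assumes the substance of the result.

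\paragraph{A false step, and what a direct proof needs.} Your claim that two admissible choices $\mu,\mu'$ for the same $\alpha$ have bounded intersection is false. Take $\g=\mathcal{C}S$ with $S$ closed of genus $2$, $\alpha=c$ non-separating, and crossing curves $d,e$ in $S-c$. The markings with single base curve $e$, respectively $T_d^n(e)$, both lie in $\lwit$ and are disjoint from $c$, yet $i(e,T_d^n e)\to\infty$.

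Bounded intersection with $\alpha$ gives no control inside the non-witness subsurfaces that $\alpha$ misses, and that is exactly what a direct proof must handle. One route:
\begin{itemize}
\item[(a)] $\lwit$ is connected. Enlarging a clean marking preserves meeting every witness, so each vertex is joined to a complete clean marking, and complete clean markings are connected by elementary moves.
\item[(b)] $\pmcg$ has finitely many orbits of vertices and of edges on $\g$ (using Definition~\ref{def:admissible}(ii)) and on $\lwit$.
\item[(c)] Define $\phi$ and a quasi-inverse $\psi$ orbitwise, and control the stabilizers. Up to finite index, $\operatorname{Stab}(\alpha)$, respectively $\operatorname{Stab}(\mu)$, is generated by mapping classes supported on pieces $Y$ disjoint from $\alpha$, respectively from $\operatorname{base}(\mu)$. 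These pieces include annuli about base curves with empty transversal, and none of them is a witness. Hence each $\operatorname{Mod}(Y)$ fixes a vertex of the target graph, and so moves any point $x$ by at most twice the distance from $x$ to that vertex. This bound is uniform by finiteness of orbit types. Products over the boundedly many pieces are then controlled by the triangle inequality.
\end{itemize}
With (a)--(c), adjacency in either graph yields uniformly bounded distance in the other, and $\phi$ and $\psi$ are coarsely inverse coarsely equivariant maps. No distance formula is needed.
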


We also note the following, which key in our proof of Theorem \ref{thm:loximpliespA-multiarccurve}, and is akin the the requirement on the vertices of the graph $\antiwit(S)$ as defined by Vokes \cite[Definition 3.1]{vokes}.

\begin{lem}\label{lem:complement-witnesses}
    Let $\{a_i\}$ be a multicurve on $S$ such that that no component of $S-\{a_i\}$ is a witness. Then the marking $\mu$ with base curves $\{a_i\}$ and transversals $t_i=\varnothing$ for each $i$ is a vertex of $\lwit$.
\end{lem}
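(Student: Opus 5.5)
To prove the lemma, I check directly that $\mu$ satisfies the two conditions defining vertices of $\lwit$ in Definition \ref{def:L-graph}. First, $\mu$ must be a clean marking. Second, $\mu$ must meet every connected witness $W\in\wit$.

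For cleanness, Definition \ref{def:clean} only constrains components with $t_i\neq\varnothing$. Here every transversal is empty, so local completeness and conditions (i)--(iv) hold vacuously. The base $\{a_i\}$ is a nonempty essential simple multicurve by hypothesis; implicitly we need the multicurve to be nonempty, which I would note. Hence $\mu$ is a clean marking.

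For meeting, fix a connected witness $W$. Since all $t_i=\varnothing$, condition (ii) of Definition \ref{def:meets} can never apply, so I must show that $W$ intersects $\{a_i\}$ essentially. I argue by contradiction. Suppose $W$ can be isotoped to be disjoint from every $a_i$. Since $W$ is connected, it then lies in a single component $Y$ of $S-\{a_i\}$.

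The key step is that a subsurface containing a witness is itself a witness. Every representative of every vertex $\alpha\in\g^{(0)}$ meets $W$, hence meets $Y\supseteq W$. Also, $Y$ is essential and non-pants, because it contains the essential non-pants surface $W$; a pants component could only contain annuli or pants, and neither is a connected witness here. So $Y$ is a witness, contradicting the hypothesis that no component of $S-\{a_i\}$ is a witness. Therefore $W$ intersects $\{a_i\}$, and $\mu$ meets $W$.

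The main obstacle is being precise about ``intersects'' in the sense of isotopy classes: $W$ should not be isotopic into the complement of $\{a_i\}$. One subtle case needs care. If $W$ is an annulus whose core is some $a_i$, it does not lie in a complementary component. I would handle this by noting that the witness condition applies to every representative of each vertex, and a curve meeting every representative of that annulus must cross $a_i$. In that case the adjacent component(s) of $S-\{a_i\}$ together form the relevant witness, or the annulus counts as intersecting $\{a_i\}$ under Kopreski's convention. I would follow Kopreski's convention, under which an annulus with core $a_i$ is regarded as intersecting $a_i$.
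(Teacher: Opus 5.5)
Your core argument is the paper's proof. Cleanness holds vacuously because every $t_i=\varnothing$, so only clause (i) of Definition \ref{def:meets} is available. A connected witness that misses $\{a_i\}$ would then lie in a single component $V$ of $S-\{a_i\}$, which would make $V$ a witness. Your nonemptiness worry is automatic: $S$ itself is always a witness, so the hypothesis forces $\{a_i\}\neq\varnothing$.

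The gap is in your last paragraph, on annuli. This is exactly where the step ``$W\subseteq V$ a witness implies $V$ a witness'' breaks, because that step needs $V$ to be non-pants. It fails only when $W$ is an annulus whose core is some $a_i$ and every component of $S-\{a_i\}$ adjacent to $a_i$ is a pair of pants.

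Your appeal to a convention under which such an annulus ``intersects'' $a_i$ is contradicted by Definition \ref{def:meets} itself. Under that convention clause (ii) would be redundant. The point of (ii) is that an annulus with core $a_i$ is met only when $t_i\neq\varnothing$, which is false for $\mu$. Your fallback also fails: pants are never witnesses, a union of adjacent components is not a component, and $\wit$ consists of connected witnesses.

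The case is not vacuous. Take the graph of pants decompositions of a closed genus-two surface by nonseparating curves, with edges for intersection at most a suitable constant; it is admissible. Any such decomposition must cross a separating curve $s$, since otherwise $s$ would be peripheral in a pants component and hence isotopic to a nonseparating curve. So the annulus about $s$ is a witness. Now let $\{a_i\}$ be a pants decomposition containing $s$. It satisfies the hypothesis, but $\mu$ meets that annulus neither by (i) nor by (ii), so $\mu\notin\lwit$.

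So no convention rescues this case. The lemma needs the extra hypothesis that no $a_i$ is the core of an annular witness; this is automatic when there are no annular witnesses, as in Vokes' twist-free setting. The paper's proof passes over this case silently as well. With that hypothesis made explicit, your argument and the paper's coincide and are complete, so you should state it as an assumption rather than invoke a convention that makes the case disappear.
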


\begin{proof}
    The marking $\mu$ vacuously satisfies the definition of a clean marking as being clean is a condition only on any non-empty transversals. Furthermore, due to all the transversals being empty, to show that $\mu$ meets every $W\in\wit$, we must show that $\mu$ satisfies (i) from Definition \ref{def:meets} for each $W\in\wit$.
    
    For contradiction, assume that some $W\in\wit$ did not meet $\mu$. Then by Definition \ref{def:meets}, $W$ must have empty intersection with every $a_i$. This would imply that $W$ must be a subset of $S-\{a_i\}$, and since $W$ is assumed to be connected, it must be contained in a single component, $V$, of $S-\{a_i\}$. But if $W$ is a witness and $W\subseteq V$, then $V$ must also be a witness, contradicting the fact that no component of $S-\{a_i\}$ is a witness. We must then conclude that $\mu$ meet every $W\in\wit$ and thus, $\mu\in\lwit^{(0)}$.
\end{proof}

We now have the required tools to prove the main theorem of this subsection.

\begin{thm}\label{thm:loximpliespA-multiarccurve}
    Let $G\leq \mcg$ act by isometries on a connected, admissible multiarc and curve graph $\g$. Further, assume $h\in G$ acts loxodromically on $\g$. Then either $h$ itself or some pure power of $h$ restricts to a pseudo-Anosov on a connected witness for $\g$.
\end{thm}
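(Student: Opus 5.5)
We argue by contrapositive, using the Nielsen--Thurston classification. Suppose no power of $h$ restricts to a pseudo-Anosov on a connected witness for $\g$; we will show that $h$ has a bounded orbit in $\g$, so $h$ is not loxodromic. Take a pure power $f=h^k$. By the classification, $f$ has a canonical reduction multicurve $\{a_i\}$ (possibly empty) such that $f$ fixes each $a_i$, fixes each component of $S-\{a_i\}$, and restricts on each component either to the identity or to a pseudo-Anosov.

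First we enlarge $\{a_i\}$ to a multicurve $\{a_i'\}$, fixed componentwise by $f$, such that no component of $S-\{a_i'\}$ is a witness. On each pseudo-Anosov component $V$ we have, by hypothesis, that $V$ is not a witness, so we leave it alone. On each identity component $V$ we add a pants decomposition of $V$. The resulting complementary pieces are either pairs of pants, which are not witnesses by definition, or pseudo-Anosov components, which are not witnesses by assumption. Every added curve is fixed by $f$, since $f$ is the identity there. If $\{a_i\}$ is empty, then $f$ is either pseudo-Anosov on $S$ or the identity. In the first case $S$ is not a witness, so $\g$ would have no witnesses; this case needs care, since a connected graph with no witnesses is bounded by the distance formula. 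In the identity case the conclusion is trivial.

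By Lemma~\ref{lem:complement-witnesses}, the marking $\mu$ with base $\{a_i'\}$ and empty transversals is a vertex of $\lwit$. Since $f$ fixes each $a_i'$ and all transversals are empty, $f\cdot\mu=\mu$; here the action of $\pmcg$ on markings is the natural one, and $f\in\pmcg$. Hence the $f$-orbit of $\mu$ in $\lwit$ is a single point. By Theorem~\ref{thm:L-qi} there is a coarsely $\pmcg$-equivariant quasi-isometry $\phi:\g\to\lwit$, with quasi-inverse $\bar\phi$, which is again a coarsely equivariant quasi-isometric embedding. Applying Lemma~\ref{lem:finitediam} to $\bar\phi$ and the point $\mu$, the $f$-orbit of $\bar\phi(\mu)$ in $\g$ has finite diameter. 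Thus $f=h^k$ is not loxodromic on $\g$. Since a loxodromic element has all of its nonzero powers loxodromic, $h$ is not loxodromic either, which is a contradiction.

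The main obstacle is bookkeeping at the interface between $G$ and $\pmcg$. The equivariance in Theorem~\ref{thm:L-qi} is only for $\pmcg$, and $G$ acts on $\g$ through its inclusion in $\mcg$. Passing to the pure power $f\in\pmcg$ (every element of $\mcg$ has a power lying in the finite-index subgroup $\pmcg$) resolves this, provided the action of $f$ on $\g$ given as part of the $G$-action agrees with its action as an element of $\pmcg$. We will assume this agreement, since both actions are induced by the action of mapping classes on curves. A secondary point is to check that the disconnected-witness conventions do not interfere. Lemma~\ref{lem:complement-witnesses} uses only connected witnesses, which is exactly the set $\wit$ used to define $\lwit$, so the statement's conclusion about a connected witness is consistent with this argument.
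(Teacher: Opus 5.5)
Your proof is correct and essentially the paper's: both build an $f$-fixed vertex of $\lwit$ from the canonical reduction system plus pants decompositions via Lemma~\ref{lem:complement-witnesses}, then transfer the bounded orbit to $\g$ with Lemma~\ref{lem:finitediam}. The only differences are cosmetic. You pants-decompose the identity components in a single contrapositive step, whereas the paper first shows some complementary component is a witness and then pants-decomposes the witness components. Also, your separate treatment of the empty-reduction-system pseudo-Anosov case is unnecessary: $S$ itself is always a witness, so your hypothesis rules that case out directly.
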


Before we prove Theorem \ref{thm:loximpliespA-multiarccurve}, we make one observation regarding connectedness of the witness in the theorem statement. The graph we denote in this paper by $\lwit$ is denoted in \cite{kopreski2023multiarc} as $\mathcal{L}_{\hat{\mathscr{X}}}$. Kopreski also defines the graph $\mathcal{L}_{\mathscr{X}}$ as an analogous graph where witnesses are allowed to be disconnected. Both graphs admit $\pmcg$-equivariant quasi-isometries to the associated admissible multiarc and curve graph $\g$, (see the remark immediately preceding Section 2.1.2 in \cite{kopreski2023multiarc}), with disconnected witnesses being required to define the hierarchy structure on $\g$. Since the proof of Theorem \ref{thm:loximpliespA-multiarccurve} does not require the hierarchy structure on $\g$ and simply requires a $\pmcg$-equivariant quasi-isometry, this allows us to require the witness in the statement of Theorem \ref{thm:loximpliespA-multiarccurve} to be connected.

\begin{proof}[Proof of Theorem \ref{thm:loximpliespA-multiarccurve}]
    First, we note that $h$ cannot be periodic because this would contradict the fact that $h$ acts loxodromically on $\g$.

    If $h$ is pseudo-Anosov, $h$ is pseudo-Anosov on a witness since the whole surface $S$ is always a witness for any $\g$.

    It then remains to consider the case that $h$ is reducible. Let $\crs$ be the canonical reduction system, $f$ a pure power of $h$, and $\{X_j\}$ the set of components of  $S - n(\crs)$. Note that since $h$ acts loxodromically on $\g$, so must $f$.
    
    Let $\bar{\phi}:\lwit\to\g$ denote a quasi-inverse of the quasi-isometry $\phi:\g \to \lwit$ defined in Theorem \ref{thm:L-qi}; because $\phi$ is a coarsely $\pmcg$-equivariant quasi-isometry, so to is $\bar{\phi}$.
    For contradiction, assume that no component of $\{X_j\}$ is a witness. By Lemma \ref{lem:complement-witnesses}, this implies that the marking $\mu_{\crs}$ with base curves $\crs$ and empty transversals is a vertex of $\lwit$. Since $\crs$ is the canonical reduction system of $f$ and is therefore invariant under the action of $f$, $\mu_{\crs}$ is a fixed point in the action of $f$ on $\lwit$. By Lemma \ref{lem:finitediam}, we can then conclude that the diameter of the set of $f$ translates of $\bar{\phi}(\mu_{\crs}) \in \g$ is finite. This contradicts the fact that $f$ acts loxodromically on $\g$. Therefore, it must be the case that at least one component of $\{ X_j\}$ is a witness.

    We now argue that one of the witness components contained in $\{X_j\}$ must be a pseudo-Anosov component. To see why, we choose a pants decomposition for each of the witness components in $\{X_j\}$, and define a multicurve $\pants$ to be the union of those pants decompositions. Notice that $S-n(\crs \cup \pants)$ consists of pairs of pants and the components of $\{X_j\}$ that are not witnesses. Because we do not consider pairs of pants to be witnesses, the marking $\mu_{\crs\cup\pants}$ with base curves $\crs \cup \pants$ and empty transversals is a vertex of $\lwit$ by Lemma \ref{lem:complement-witnesses}. Additionally, since $\crs$ is fixed by $f$, if it were the case that none of the witnesses in $\{X_j\}$ were pseudo-Anosov components, then $\pants$, and hence $\crs \cup \pants$, would also be fixed by $f$. Again, applying Lemma \ref{lem:finitediam} to $\mu_{\crs\cup \pants}$, as we did with $\mu_{\crs}$ before, gives us a contradiction to the assumption that $f$ acts loxodromically on $\g$. Hence, it must be the case that one of the witnesses in $\{X_j\}$ is a pseudo-Anosov component.
\end{proof}

\subsection{Disk graphs}\label{subsec:diskgraphproof}

For the entirety of this section, let $V_g$ denote a genus $g\geq 2$ handlebody and let $S=\bddy V_g$, which is homeomorphic to an orientable surface of genus $g$.

Most of the work to prove that an element of the handlebody group $\hg$ acting loxodromically on $\dg$ must restrict a pseudo-Anosov on a witness comes from the case when $\bddy f$ is a reducible mapping class. We further split this case into two subcases, accounted for in Lemmas \ref{lem:compressible} and \ref{lem:incompressible} below.

\begin{lem}\label{lem:compressible}
    Suppose $f\in\hg$ acts loxodromically on the disk graph and is reducible with canonical reduction system $\crs$. If any component $W$ of $S-n(\redsys)$ is compressible in $V_g$, then $f$, and therefore any pure power of $f$, must restrict to a pseudo-Anosov on $W$ and $W$ must be a witness.
\end{lem}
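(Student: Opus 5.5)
The plan is to pass to a pure power, use the compressing disk in $W$ to build explicit bounded orbits in $\dg$, and derive contradictions with loxodromicity in two separate steps. Let $f'$ be a pure power of $f$, so that $f'$ fixes each curve of $\crs$ and each component of $S-n(\crs)$, and restricts on each component either to the identity or to a pseudo-Anosov. Since $f$ acts loxodromically on $\dg$, so does $f'$, because the orbit map of a power is a reparametrization of a subsequence of the orbit map of $f$. Since $W$ is compressible, there is an essential disk $D\subseteq V_g$ with $\bddy D\subseteq W$ an essential curve of $S$. We may take $\bddy D$ essential and non-peripheral in $W$: if it were peripheral it would be isotopic to a curve of $\crs$, which we could push slightly into $W$.

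\emph{Step 1: $f'|_W$ is pseudo-Anosov.} Suppose instead that $f'|_W$ is the identity. Then $f'(\bddy D)$ is isotopic to $\bddy D$ in $S$. By the standard fact that an essential disk in a handlebody is determined up to isotopy by its boundary curve (irreducibility of $V_g$ together with an innermost-disk argument), $f'(D)$ is isotopic to $D$. Thus $D$ is a fixed vertex of $\dg$, the $f'$-orbit of $D$ has finite diameter, and this contradicts loxodromicity. Hence $f'|_W$ is pseudo-Anosov. In particular $W$ is not a pair of pants, and $W$ is non-annular because it is a component of the complement of the canonical reduction system. This is the sense in which $f$, via any pure power, restricts to a pseudo-Anosov on $W$. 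Any other pure power is a power of a common pure power, and a nonzero power of a pseudo-Anosov is pseudo-Anosov, so the conclusion holds for every pure power.

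\emph{Step 2: $W$ is a witness.} Suppose some essential disk $E$ has a representative with $\bddy E\cap W=\varnothing$. For every $n$, $\bddy (f'^n D)=f'^n(\bddy D)$ lies in $W$, since $f'$ preserves $W$. So $\bddy E$ and $\bddy f'^n(D)$ are disjoint curves. An innermost-disk surgery then isotopes the disks, fixing their boundaries, to be disjoint in $V_g$. Hence $E$ is adjacent to every $f'^n(D)$ in $\dg$, and the orbit $\{f'^n D\}$ has diameter at most $2$. This again contradicts loxodromicity, so every essential disk meets $W$ and $W$ is a witness.

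The main obstacle is the 3-dimensional input used twice above: that disjointness of boundary curves yields disjoint disks, and that the boundary curve determines the disk up to isotopy. I would cite both as standard consequences of irreducibility of $V_g$ via the innermost-disk argument. One small point also needs care: ensuring $\bddy D$ can be chosen to lie in $W$ after isotoping away from $n(\crs)$, which I would handle as in the first paragraph.
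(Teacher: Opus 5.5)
Your Steps 1 and 2 are correct, and they are the paper's argument run with a pure power $f'$. An identity component would make $D$ a fixed vertex, and a disk missing $W$ would be adjacent to the whole orbit of $D$. The 3-dimensional justifications you add are fine.

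\textbf{The gap.} You never prove that $W$ is invariant under $f$ itself. The lemma asserts that $f$ --- not merely a pure power --- restricts to a pseudo-Anosov on $W$, and this requires $f(W)=W$. A priori $f$ could cyclically permute $W$ with other components of $S-n(\crs)$, and a pure power only sees the first-return map. Your sentence ``this is the sense in which $f$, via any pure power, restricts to a pseudo-Anosov on $W$'' swaps the stated conclusion for a weaker one instead of proving it. The paper's proof opens precisely by ruling out $f(W)\cap W=\varnothing$. Only the weaker version is used in Theorem~\ref{thm:lox-implies-pa-handlebody}, but it is not what the lemma says.

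\textbf{How to close it.} The missing step follows from your own Step 2.
\begin{itemize}
\item Since $f(\crs)=\crs$, $f(W)$ is again a component of $S-n(\crs)$. So if $f(W)\neq W$, then $f(W)\cap W=\varnothing$.
\item But $f(D)$ is an essential disk with boundary in $f(W)$, so it has a representative missing $W$. This contradicts that $W$ is a witness.
\item The paper argues differently: if $f^p(W)=W$ but $f(W)\neq W$, then $\bddy f^{1+kp}(D)$ is disjoint from $\bddy D$ for every $k$. That puts infinitely many orbit points within distance $1$ of $D$, which is incompatible with the orbit map being a quasi-isometric embedding.
\item Once $f(W)=W$, the map $f|_W$ is a mapping class of $W$ with the pseudo-Anosov power $f'|_W$. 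It is therefore itself pseudo-Anosov, since powers of periodic or reducible classes are periodic or reducible.
\end{itemize}

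\textbf{A minor point.} Your justification for taking $\bddy D$ non-peripheral in $W$ is muddled: pushing a peripheral curve into $W$ leaves it peripheral. The correct reason is that a peripheral $\bddy D$ would be isotopic to a curve of $\crs$, so $D$ would be an $f'$-fixed vertex. In any case, non-peripherality is never used in your argument.
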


\begin{proof}
    
    Let $W$ be a compressible component of $S-n(\redsys)$. Then there is some disk $D$ contained in $W$. Let us first see why $W$ must actually be invariant under the action of $f$. Suppose instead that for some power of $f$, say $f^n$, that $f^n(W) \cap W = \varnothing$. Then $f^n(D)$ is disjoint from $D$, and in fact there must be infinitely many powers of $f^n$, say $\{f^{n_i}\}_{i=1}^{\infty}$ such that each $f^{n_i}(D)$ is disjoint from $D$. This would contradict the fact that $f$ acts loxodromically on $\dg$ because these disks would all be at distance at most two from each other in $\dg$. So $W$ must be invariant under  the action of $f$. It must also be the case that $W$ a pseudo-Anosov component because otherwise $D$ would provide a fixed point in the action of $f$ on $\dg$.
    
    We now show that $W$ must be a witness. Suppose for contradiction that $W$ was not a witness. This would mean there is some disk $E$ contained in $S-W$. Since $W$ is invariant under the action of $f$, it must be the case that for any $k$, $f^k(D) \subseteq W$. It then follows that $f^k(D)$ is disjoint from $E$ for each $k$, meaning that all powers $f^k(D)$ are always distance at most two from each other. Thus, $f$ must not act loxodromically on $\dg$ -- a contradiction. Therefore we have shown that $W$ is both a pseudo-Anosov component for $f$ and a witness for $\dg$.
\end{proof}

\begin{remark}
    We note that the preceding proof also shows that for $f$ as in the statement of Lemma \ref{lem:compressible}, only one component of $S-n(\crs)$ can be compressible if $f$ is loxodromic.
\end{remark}

Some additional definitions and facts will be required for the next proof. The first fact concerns which Dehn twists are contained in $\hg$.
\begin{thm}[{{\cite[Theorem 1.11]{oertel-handlebody}, \cite[Theorem 1]{McCullough-twist}}}]\label{thm:twists}
    Suppose $h\in \hg$ is a composition of
    non-trivial
    Dehn twists about simple, closed curves $c_1, \ldots, c_n$. Then for each $i$, either $c_i$ bounds a disk, or there is a $j\neq i$ such that $c_i$ and $c_j$ cobound a properly embedded, incompressible annulus in $V_g$.
\end{thm}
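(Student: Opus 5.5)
We would first reduce to the case where the $c_i$ are pairwise disjoint and pairwise non-isotopic, so that $h=\prod_i T_{c_i}^{k_i}$ is a multitwist with all $k_i\neq 0$. This is the setting in which the statement is used here: the twists arise from the reducible part of an element of $\hg$ along its canonical reduction system. Next we would strip off the curves that bound disks. If $c_i$ bounds a disk $E$, then $T_{c_i}$ extends over $V_g$ as a twist along $E$, so it lies in $\hg$. Replacing $h$ by $h\,T_{c_i}^{-k_i}$ keeps us in $\hg$. So we may assume that no remaining $c_i$ bounds a disk, and we must produce, for each $i$, some $j\ne i$ such that $c_i$ and $c_j$ cobound an incompressible annulus.

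The main tool would be iteration together with disk surgery. Fix $i$. Since $c_i$ is essential and not a meridian, and the disk set fills, there is an essential disk $D$ whose boundary meets $c_i$ essentially. Isotope $D$ so that $\bddy D$ meets $\bigcup c_j$ minimally. For every $n$ the map $h^n$ extends over $V_g$, so $D_n:=h^n(D)$ is an essential disk. Its boundary $\bddy D_n$ agrees with $\bddy D$ away from annular neighbourhoods $A_j$ of the $c_j$. Inside $A_j$, each arc of $\bddy D\cap A_j$ is replaced by an arc winding $nk_j$ times around $c_j$. Make $D$ and $D_n$ transverse and remove closed curves of intersection by innermost-disk arguments, using that handlebodies are irreducible. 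Then take an outermost arc $\gamma$ of $D_n\cap D$ in $D_n$. It cuts off a subdisk $\Delta\subset D_n$ whose boundary is $\gamma\cup\delta$, with $\delta\subset \bddy D_n$ an arc whose interior misses $\bddy D$. Surgering $D$ along $\Delta$ produces disks whose boundaries consist of $\delta$ together with an arc of $\bddy D$.

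Once $n$ is much larger than $i(\bddy D,\bigcup c_j)$, the arc $\delta$ must lie almost entirely inside the twist regions. This is because the intersection points of $\bddy D_n$ with $\bddy D$ are spaced along $\bddy D_n$ by the winding. Consequently $\delta$ has one of two shapes. It may run many times around a single $c_i$. Alternatively, it may enter $A_i$, wind, cross a complementary region along an arc of $\bddy D$, and then wind around some $A_j$. In the first case the surgered disk has boundary isotopic to $c_i$ after discarding trivial pieces, which contradicts the assumption that $c_i$ bounds no disk. In the second case we get a disk $\Delta$ whose boundary consists of two long winding arcs around $c_i$ and $c_j$ joined by two short arcs. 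We would then band $\Delta$ along these windings: taking the frontier of a regular neighbourhood of $\Delta\cup A_i\cup A_j$ yields an annulus in $V_g$ whose boundary components are $c_i$ and $c_j$. The case $j=i$ must be excluded; there the same construction gives an annulus bounded by two parallel copies of $c_i$, from which one extracts a compressing disk for $c_i$ by the loop theorem, again a contradiction. Finally, the annulus is incompressible, because a compression would make $c_i$ bound a disk.

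We expect the main obstacle to be this last extraction step. The difficulty is ensuring that the outermost subdisk $\Delta$ really interacts with exactly two twist curves in the controlled way described, and turning $\Delta$ into an embedded, properly embedded annulus. To control the combinatorics we would first pass to $n$ large and a minimal-intersection choice of $D$. If the direct construction proves too messy, the fallback is Oertel's approach: argue that $\sum_j |k_j|\,c_j$ is a limit of disk boundaries in $\mathcal{PML}$, and apply the characterization of the limit set of the disk set to the resulting multicurve, which forces either a meridian or a pair of curves cobounding an annulus.
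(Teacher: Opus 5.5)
\emph{The paper does not prove this statement.} It quotes it from Oertel and McCullough, where the curves are disjoint and pairwise non-isotopic. Your multitwist reading is therefore the intended hypothesis, though it is an interpretation and not a reduction: without disjointness the statement fails. Judged on its own terms, your argument has a genuine gap at its central step.

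\emph{The claimed shapes of $\delta$ cannot occur.} By construction the interior of $\delta$ misses $\partial D$. But $\partial D$ crosses $A_i$ in $i(\partial D,c_i)>0$ spanning arcs, and it crosses every $A_j$ in which $\partial D_n$ twists at all. Any subarc of $\partial D_n$ that makes even one full turn around such a $c_j$ must meet all of those spanning arcs. So the points of $\partial D_n\cap\partial D$ are tightly packed in the twist regions, not spaced out by the winding.

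Every arc of $\partial D_n-\partial D$ is therefore of one of two kinds:
\begin{itemize}
\item an arc going less than once around a single $A_j$, between adjacent strands of $\partial D$;
\item an arc running parallel to an arc of $\partial D$ outside $\bigcup_j A_j$.
\end{itemize}
Hence $\delta$ carries no winding, and neither of your two cases arises. The surgered disks are bounded by $\delta$ together with an arc of $\partial D$, so they meet $\bigcup_j c_j$ a number of times bounded independently of $n$. Taking $n$ large gains nothing.

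Switching to outermost arcs in $D$, or iterating the surgery, does not help. A disk disjoint from $D$ has boundary disjoint from $\partial D$, so its twisting about $c_i$ relative to $\partial D$ is bounded. The underlying problem is that a reference disk meeting $c_i$ cannot detect twisting about $c_i$. You would need an object disjoint from $c_i$, for instance a disk missing $\bigcup_j c_j$, along which one can cut and induct on genus. You would also need a separate argument for the case where $\bigcup_j c_j$ meets every disk. The proposal supplies neither.

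\emph{The downstream steps are not justified either.} Take the disk $\Delta$ you describe at face value. Its boundary crosses $A_i$ and $A_j$ along spanning arcs, so bands attach to both boundary circles of each annulus. The frontier of a regular neighbourhood of $\Delta\cup A_i\cup A_j$ is then bounded by band sums of copies of $c_i$ and $c_j$, and nothing identifies these with $c_i$ and $c_j$. The case $j=i$ and your ``first case'' are only asserted.

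\emph{The fallback rests on a characterization that is false.} In $V_2$, let $d$ be a meridian of one handle and $c$ a longitude of the other. Let $\alpha$ be an arc from one side of $d$ to the other that crosses $c$ once. Band-summing two parallel copies of $d$ along $T_c^n(\alpha)$ gives meridians whose boundaries converge projectively to the non-meridian curve $c$. So membership in the limit set of the disk set cannot by itself force a meridian or an annulus-cobounding pair. A limit argument must use that $h^n(D)$ is a disk for every $D$ and every $n$, and that is exactly the missing content. Even a correct conclusion of the form ``some $c_j$ is a meridian or some pair cobounds an annulus'' would not give the statement for each $i$.
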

When we have a composition of twists $T_{c_i}T_{c_j}^{-1}$ where $c_i$ and $c_j$ cobound an annulus as in the preceding theorem, we will refer to this composition as an \emph{annulus twist}.

When considering two curves $a$ and $b$ in $S$, we may sometimes assume that these curves are \emph{tight}, meaning that they realize their geometric intersection number.

We must also introduce some definitions concerning various properties and types of three-manifolds. Let $F$ be a compact surface; following \cite{oertel-handlebody}, $F$ is allowed to be disconnected. We will consider $I$-bundles (interval bundles) $p:H\to F$ where the base space $F$ is a surface. We define the \emph{vertical boundary} $\vbddy H$ of $H$ as $p^{-1}(\bddy F)$. The \emph{horizontal boundary} $\hbddy H$ of $H$ is the closure of $\bddy H - \vbddy H$.
Note that these are referred to as the exterior and interior boundaries of an $I$-bundle in \cite{oertel-handlebody}. As all $I$-bundles we consider will be submanifolds of orientable handlebodies, the $I$-bundles themselves will be orientable.

For an $I$-bundle $p:H\to F$ as in the previous paragraph, we will say that a homeomorphism $f:H\to H$ is a \emph{lift} of an homeomorphism $g:F\to F$ when $p\circ f = g\circ f$.

Oertel proves the following facts in relation to homeomorphisms of handlebodies which will be useful in proving Lemma \ref{lem:incompressible}.

\begin{lem}[{{\cite[Lemma 2.13 and proof]{oertel-handlebody}}}]\label{thm:oertel-pa}
    Let $f:H\to H$ be a homeomorphism of a handlebody $H$ such that $\bddy f$ is reducible on $\bddy H$. Let $\crs$ be the canonical reduction system and suppose all components of $R=H-n(\crs)$ are incompressible. If $\bddy f$ is pseudo-Anosov on some component of $R_0$ of $R$, then there is a submanifold $J\hookrightarrow H$, the total space of an $I$-bundle, with $\hbddy J \subseteq \bddy H$, and with (up to isotopy) the restriction of $f$ to $J$ the lift of a pseudo-Anosov homeomorphism of a surface. 
    Moreover, $\hbddy J$ is the union of the pseudo-Anosov components of $R$.
\end{lem}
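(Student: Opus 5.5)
The plan is to deduce this from the JSJ/characteristic submanifold theory of Johannson, applied to the pared manifold $(H,R)$. Since every component of $R$ is incompressible, $(H, R)$ is a manifold with incompressible ``boundary pattern'' on $R$. Isotope $f$ so that it preserves $n(\crs)$ and $R$, and so that it restricts to a pseudo-Anosov $\bddy f|_{R_0}$. Then take the characteristic submanifold $\Sigma$ of $(H,R)$: the maximal, unique-up-to-isotopy union of essential $I$-bundles and Seifert fibered pieces meeting $\bddy H$ in $R$. By uniqueness, $f$ may be isotoped, rel the structure on $\bddy H$ up to isotopy, to preserve $\Sigma$ and its complement.

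Next I would use Johannson's finiteness theorem. The group of isotopy classes of homeomorphisms of the complement $H-\Sigma$ (respecting the boundary pattern) is finite. So if $R_0$ met $\bddy H$ outside $\Sigma$ in an essential subsurface, a power of $f$ would be isotopic to the identity on that part. That contradicts $\bddy f|_{R_0}$ being pseudo-Anosov, since a pseudo-Anosov has no periodic essential subsurface. Hence $R_0$ lies, up to isotopy, in $\Sigma\cap\bddy H$.

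I would then rule out Seifert fibered pieces. In a handlebody these are solid tori (or $I$-bundles over annuli/Möbius bands) meeting $\bddy H$ in annuli, and an annulus cannot carry a pseudo-Anosov piece of $R_0$. So $R_0\subseteq \hbddy J$ for an $I$-bundle component $p:J\to F$ of $\Sigma$, with $\hbddy J\subseteq\bddy H$. Replacing $f$ by a power if needed (or arguing directly), $f$ preserves $J$. Since the $I$-fibration of an $I$-bundle that is not a product over a disk or annulus is unique up to isotopy, $f$ can be isotoped to be fiber-preserving. It is therefore a lift of a homeomorphism $g:F\to F$. Now $p|_{\hbddy J}$ is a (one- or two-sheeted) covering onto $F$ conjugating $\bddy f|_{\hbddy J}$ to $g$. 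So $\bddy f|_{R_0}$ being pseudo-Anosov forces $g$ to be pseudo-Anosov. This holds because lifts of reducible or periodic maps are reducible or periodic, and conversely, via Thurston's classification and the lifting of invariant multicurves and measured foliations.

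The step I expect to be the main obstacle is the ``moreover'' clause: that $\hbddy J$ is exactly the union of the pseudo-Anosov components of $R$. There are two inclusions.

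For ``$\subseteq$'': the curves $\bddy(\hbddy J)=\bddy(\vbddy J)$ are $f$-invariant (up to isotopy) essential curves. They are boundaries of $\hbddy J$, on which $\bddy f$ is a lift of a pseudo-Anosov, so they must be canonical reduction curves. The argument is that canonical reduction curves are exactly the invariant curves adjacent to a pseudo-Anosov piece or separating distinct behaviors. Then each component of $\hbddy J$ is a component of $R$ on which $\bddy f$ covers $g$, hence is pseudo-Anosov.

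For ``$\supseteq$'': apply the argument above to every pseudo-Anosov component. Each lies in the horizontal boundary of some $I$-bundle component. Taking $J$ to be the union of those components (allowing $F$ disconnected) gives equality.

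I would first try to verify the $\vbddy J$-curves claim using the characterization of $\crs$ as the intersection of all maximal reduction systems. If that proves delicate, I would fall back on Oertel's original argument.
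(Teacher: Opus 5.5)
Your outline is correct, and it is the standard characteristic-submanifold argument on which the cited result rests. The paper gives no proof of its own and imports the statement, including the ``moreover'' clause, from Oertel's Lemma 2.13 and its proof; to my recollection that argument uses the same Johannson machinery: finiteness off $\Sigma$, exclusion of solid-torus pieces, uniqueness of the $I$-fibration, and the covering $\hbddy J\to F$ forcing the base map to be pseudo-Anosov. One small tightening: if $f(R_0)=R_0$ you need not pass to a power, since $f(\Sigma)=\Sigma$ then forces $f$ to preserve the $I$-bundle component whose horizontal boundary contains $R_0$; otherwise take its $f$-orbit, which is why disconnected base surfaces are allowed.
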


\begin{prop}[{{\cite[Proposition 2.15 and proof]{oertel-handlebody}}}]\label{prop:oertel-annular-red}
    Let $f:H\to H$ be a homeomorphism of a connected handlebody $H$. Let $\crs$ be the canonical minimal reduction system for the induced homeomorphism $\bddy f$ of $\bddy H$. Suppose $R=H-n(\crs)$ is incompressible and some component of $R$ is a pseudo-Anosov component. Then there exists a submanifold $J\hookrightarrow H$, the total space of an $I$-bundle satisfying the properties in Lemma \ref{thm:oertel-pa}, and either
    \begin{enumerate}
        \item[(1)] $H=J$, or
        \item[(2)] $J$ is a proper submanifold of $H$ and $\vbddy J$ contains an $f$-invariant annulus $A$ that is incompressible and non-boundary parallel in $H$, with the components of $\bddy A$ isotopic in $\bddy H$ to curves in $\crs$.\label{prop:oertel-annular-red-annulus}
    \end{enumerate}
\end{prop}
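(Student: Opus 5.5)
The plan is to start from the $I$-bundle $J\hookrightarrow H$ given by Lemma \ref{thm:oertel-pa}. We may isotope $f$ so that $f(J)=J$ and $f|_J$ is the lift of a pseudo-Anosov homeomorphism $g$ of the base surface $F$. We may also take $\hbddy J$ to be exactly the union of the pseudo-Anosov components of $R$. The vertical boundary $\vbddy J=p^{-1}(\bddy F)$ is then a finite union of annuli, one for each boundary circle of $F$. The boundary curves of each annulus lie in $\bddy(\hbddy J)$, so they are boundary curves of pseudo-Anosov components of $R$. Each is therefore isotopic in $\bddy H$ to a curve of $\crs$. If $\vbddy J=\varnothing$ then $J$ is a closed-up $I$-bundle component, and connectedness of $H$ forces $H=J$, which is case (1). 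So assume $J\neq H$ and $\vbddy J\neq\varnothing$.

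\emph{Step 1: incompressibility.} Let $A$ be a component of $\vbddy J$ and suppose a compressing disk for $A$ exists in $H$. Surgering along it shows that the core of $A$, and hence a boundary curve of $A$, bounds a disk in $H$. That boundary curve is an essential curve of $\crs$ lying in the boundary of an incompressible component of $R$, so this contradicts the incompressibility hypothesis on $R$. Hence every vertical annulus is incompressible.

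\emph{Step 2: some vertical annulus is not boundary parallel.} I expect this step to be the main obstacle. Suppose every component of $\vbddy J$ were boundary parallel in $H$. Each would then cut off a solid torus $T$ whose boundary is $A$ together with an annulus $A'\subseteq\bddy H$. There are two cases for each such $T$.
\begin{itemize}
\item If $T$ lies outside $J$, then $A'$ is an annulus in $\bddy H$ isotopic to $A$. Its core curve is parallel to a curve of $\crs$, so $A'$ is just a neighborhood of a reduction curve. Absorbing $T$ into $J$ as a collar then does not change $\hbddy J$ up to isotopy, and $J$ extends across $T$ as an $I$-bundle.
\item If $T$ contains $J$, then $J$ sits inside a solid torus with $\hbddy J$ in its boundary. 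Since $\hbddy J$ carries a pseudo-Anosov, it is not an annulus, so this is impossible.
\end{itemize}
After absorbing all these collars, $J$ fills a component of $H$, and connectedness of $H$ gives $H=J$, contradicting $J\neq H$. So at least one vertical annulus is incompressible and not boundary parallel. To make this rigorous I would work with $\overline{H-J}$ and argue component by component: any component meeting $J$ only in boundary-parallel annuli is a product collar.

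\emph{Step 3: $f$-invariance.} Since $f(J)=J$, $f$ permutes the finitely many components of $\vbddy J$. It preserves the subcollection of those that are incompressible and not boundary parallel, because both properties are topological invariants of the pair $(H,A)$. If $f$ does not fix such an annulus, replace $f$ by a pure power. Pure powers have the same canonical reduction system and the same pseudo-Anosov components, and they fix each curve of $\crs$, so they fix each vertical annulus up to isotopy. This yields the $f$-invariant annulus $A$ of case (2), whose boundary components are isotopic to curves in $\crs$ by the first paragraph. In the application to Theorem \ref{thm:handlebody} only pure powers matter, so the passage to a power is harmless. Alternatively, I would try to argue directly that the canonical system is fixed curve-wise, which would avoid passing to a power.
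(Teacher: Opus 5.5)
Your overall strategy is reasonable, and Step 1 and the two-case analysis in Step 2 are correct. The strategy is to take the $I$-bundle $J$ from Lemma \ref{thm:oertel-pa}, show its vertical annuli are incompressible, and sort out which are boundary parallel. The paper gives no internal proof to compare with; it only cites Oertel.

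The conclusion you draw from Step 2 is misphrased, though. If every vertical annulus cuts off a product collar $T$ on the side away from $J$, then $H=J\cup\bigcup T$. This does not contradict $J\neq H$: if $H$ is itself an $I$-bundle, shrinking it slightly near some vertical annuli gives exactly such a $J$. What your argument actually gives is that you should replace $J$ by $J'=J\cup\bigcup T$, the union over all such outside collars. This can be done $f$-equivariantly: $f(J)=J$ and $f$ permutes these collars, so after an isotopy rel $J$ the map is a product on them and $f|_{J'}$ is still a lift of $g$. Then either $J'=H$, which is case (1), or $\vbddy J'$ has properly embedded components. Those components are incompressible by Step 1 and, by your two cases, not boundary parallel.

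The genuine gap is Step 3. The statement asks for an $f$-invariant annulus, but you only produce one invariant under a pure power. Your suggested way around the power, showing that $f$ fixes $\crs$ curve-wise, is false. In fact no single component of $\vbddy J$ need be $f$-invariant:
\begin{itemize}
\item Let $F$ be a four-holed sphere with boundary curves $c_1,\dots,c_4$. Let $g$ be a pseudo-Anosov of $F$ inducing the permutation $(12)(34)$; compose a pure pseudo-Anosov with the involution realizing $(12)(34)$.
\item Glue solid tori to $F\times I$ along $c_1\times I$ and $c_2\times I$, each annulus winding twice around its solid torus. The result $H$ is a genus-$3$ handlebody.
\item Let $f=g\times\operatorname{id}$, extended by swapping the two solid tori. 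It satisfies every hypothesis, with $F\times\{0\}$ and $F\times\{1\}$ the pseudo-Anosov components.
\item The only essential vertical annuli are $c_1\times I$ and $c_2\times I$, and $f$ interchanges them. It also permutes the curves of $\crs$ nontrivially.
\end{itemize}

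So for $f$ itself, conclusion (2) can only hold with $A$ read as the (possibly disconnected) union of the essential components of $\vbddy J'$. That union is $f$-invariant by exactly the permutation argument you already give, and you should say this explicitly. Alternatively, state the conclusion for a pure power of $f$, as you do. That weaker version is all the paper needs, since in Lemma \ref{lem:incompressible} the proposition is applied only to the pure power $h$.
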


Finally, we will need the techniques relating to boundary compressions and corresponding surgeries; we follow the conventions of Masur and Schleimer \cite[Section 8]{MasSchleim}. First, a triple $(B,\alpha, \beta)$ is called a \emph{bigon} if $B$ is a disk and $\alpha$ and $\beta$ are arcs in $\bddy B$ such that $\alpha\cup \beta = \bddy B$ and $\alpha \cap \beta = \bddy\alpha = \bddy \beta$. Given a three-manifold $M$ and surface $F$ that is either contained in $\bddy M$ or is properly embedded in $M$, an embedded bigon $(B,\alpha,\beta)\subseteq (M,F, \bddy M)$ is called a \emph{surgery bigon for $F$} if
\begin{itemize}
    \item $B\cap \bddy F = \bddy\alpha = \bddy \beta$,
    \item $B\cap F = \alpha$, and
    \item $B\cap \bddy M =\bddy B$ if $F\subseteq \bddy M$, or $B\cap \bddy M = \beta$ if $F$ is properly embedded in $M$.
\end{itemize}
The presence of a surgery bigon for $F$ allows us to surger $F$ along $B$ in the following manner. Construct a closed regular neighborhood $N$ about $B$. Remove the rectangle $N\cap F$ from $F$. Then form the surgery $F_B$ by gluing on the two bigon components of $\operatorname{fr}(N)-F$ and taking the closure. In the case that $F\subset \bddy M$, we also isotope the interior of $F_B$ into the interior of $M$ to guarantee that $F_B$ is properly embedded in $M$.

We now have all the necessary tools to prove the following lemma.

\begin{lem}\label{lem:incompressible}
    Suppose $f\in\hg$ acts loxodromically on the disk graph and is reducible with canonical reduction system $\crs$. Suppose that every component of $S-n(\redsys)$ is incompressible. Then there is some subsurface $W\subseteq S$ that is a witness for $\dg$ and on which some pure power of $f$ restricts to a pseudo-Anosov.
\end{lem}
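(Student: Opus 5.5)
Throughout I would replace $f$ by a pure power; this is harmless because loxodromicity passes to powers and back. The argument splits into two steps: first show that some component of $S-n(\crs)$ is a pseudo-Anosov component, then use Oertel's $I$-bundle to locate a witness among the pseudo-Anosov components.

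\emph{Step 1: some component of $S-n(\crs)$ is a pseudo-Anosov component.} Suppose not. Then the pure power $f$ is the identity on every component of $S-n(\crs)$, so $f$ is a product of nontrivial Dehn twists about curves of $\crs$. By Theorem \ref{thm:twists}, each such curve either bounds a disk or cobounds an incompressible annulus in $V_g$ with another twisting curve.
\begin{itemize}
\item If some curve bounds a disk $D$, then $f(D)=D$. This is a fixed point of $f$ on $\dg$, contradicting loxodromicity.
\item Otherwise $f$ is a product of annulus twists about a collection $\annuli$ of incompressible annuli, whose boundaries are disjoint curves of $\crs$. Take any essential disk $D$ and surger it along surgery bigons coming from outermost arcs of $D\cap\annuli$. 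Incompressibility of the annuli lets the simple closed curves of intersection be removed first. This produces an essential disk $D'$ disjoint from $\annuli$ and from $\crs$.
\end{itemize}
In the second case $f$ fixes $D'$, again a contradiction. So some component is pseudo-Anosov.

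\emph{Step 2: the $I$-bundle and the choice of $W$.} Now Lemma \ref{thm:oertel-pa} and Proposition \ref{prop:oertel-annular-red} apply, since all components are incompressible. They give an $I$-bundle $J\to F$ with $\hbddy J$ equal to the union of the pseudo-Anosov components, and $f|_J$ the lift of a pseudo-Anosov of $F$. I take $W$ to be a component of $\hbddy J$; then $f$ is pseudo-Anosov on $W$ by construction, and it remains to show $W$ is a witness.

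In case (1), $V_g=J$. Every essential disk in an $I$-bundle is isotopic to a vertical disk $p^{-1}(\text{arc})$, which meets every component of $\hbddy J$ (both components if the bundle is a product, the single one if twisted). So $W$ is a witness.

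In case (2), there is an $f$-invariant incompressible, non-boundary-parallel annulus $A\subseteq\vbddy J$ with $\bddy A$ isotopic into $\crs$. Suppose some essential disk $E$ misses $W$. I would first surger $E$ along the incompressible annuli of $\vbddy J$, again via outermost surgery bigons, to get an essential disk. Either this disk is vertical in $J$ (and so meets $W$, a contradiction) or it lies in the closure of $V_g-J$ with boundary disjoint from $\hbddy J$. The complement $V_g-J$ is $f$-invariant. On $\bddy V_g$ minus the pseudo-Anosov components, $f$ acts only by twists about $\crs$ and the identity on the remaining components. Repeating the Step 1 argument inside $V_g-J$ then yields a disk with bounded $f$-orbit, contradicting loxodromicity.

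\emph{Main obstacle.} The delicate part is the surgery bookkeeping. I must check that every surgered disk stays essential, and in case (2) that a disk avoiding $W$ can be pushed entirely off $J$ so the twist-only argument applies there. If a direct $f$-fixed disk is not available, I would fall back on showing the orbit has diameter at most two in $\dg$. This mirrors the proof of Lemma \ref{lem:compressible}: exhibit a single disk disjoint from every $f^k$-translate of the surgered disk.
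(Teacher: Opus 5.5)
\textbf{Gap: case (2) of Oertel's proposition.}

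Your Step 1 and case (1) are essentially sound, with one correction: it is false that every essential disk in an $I$-bundle is isotopic to a vertical one. In $F\times I$ with $F$ a one-holed torus (a genus-two handlebody), every essential arc of $F$ is nonseparating, so every vertical disk is nonseparating, yet there are separating essential disks. What you need is weaker and true. Take an essential disk $E$ with $\bddy E$ tight with $\crs$. Then $\bddy E$ cannot lie in an incompressible horizontal component, nor be isotopic to a curve of $\crs$ (such curves do not bound disks). So it crosses a vertical annulus, and in the product case the two boundary circles of that annulus lie in different horizontal components. This is essentially the argument of Claim \ref{claim:pa-witness}.

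The genuine gap is case (2), which is where the content of the lemma lies. There are two problems.
\begin{itemize}
\item \emph{Choice of $W$.} Oertel's $J$ may be disconnected, since $\hbddy J$ is the union of \emph{all} pseudo-Anosov components. A disk lying in one component of $J$ need not meet a component $W$ of $\hbddy J$ lying in another. So ``vertical in $J$, hence meets $W$'' fails, and you have no rule for choosing which pseudo-Anosov component should be the witness.
\item \emph{The complement of $J$.} ``Repeating Step 1 inside $V_g-J$'' is not available. Step 1 applied Theorem \ref{thm:twists} to $f\in\hg$ as a product of twists, whereas now $f$ is pseudo-Anosov on $W$. You would need an analogue of that theorem for $f$ restricted to the pieces of $\overline{V_g-J}$. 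Their boundary contains the frontier annuli of $\vbddy J$, where $f$ is uncontrolled.
\end{itemize}

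The missing idea is the paper's localization. Your Step 1 surgery already shows that an essential disk can be pushed into a single component of $V_g-n(\annuli)$. The paper does this unconditionally, cutting along the annulus-twist annuli \emph{before} invoking Oertel, to obtain a compressible component $H$. From there:
\begin{itemize}
\item Since $H$ is invariant under the pure power and contains a disk $D$, every essential disk meets $H$. Otherwise it is disjoint from all translates of $D$, giving an orbit of diameter at most two.
\item $H$ carries a pseudo-Anosov component, since otherwise $D$ is fixed.
\item Proposition \ref{prop:oertel-annular-red} is applied to the restriction to $H$, with alternative (2) excluded by construction. So the connected piece $H$ is itself the $I$-bundle.
\end{itemize}
The witnesses are then the components of $\hbddy H$, and no analysis of the complement of $J$ is needed.
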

\begin{proof}
    Suppose $\crs$ is the canonical reduction system of $f$ and that $h$ is a pure power of $f$. Consider Theorem \ref{thm:twists}, which states that the only non-trivial Dehn twists in the handlebody groups are either twists about disks or annulus twists. 
    Because $f$ acts loxodromically on $\dg$, no curve in $\crs$ can be a disk, else there would be a fixed point under the action of $f$ on $\dg$. Therefore, if $c\in \crs$, then either $h$ acts as a trivial Dehn twist along $c$, or twisting along $c$ is part of an annulus twist, along with a twist on some other curve in $\crs$. 
    
    We let $\annuli$ be a collection of properly embedded annuli bounded by the curves in $\crs$ that correspond to annulus twists. We further define $\{H_i\}_{i=1}^n$ to be the components of $V_g-n(\annuli)$.

    \begin{claim}\label{claim:H-compressible}
        There is an $H\in \{H_i\}_{i=1}^n$ such that $\bddy H$ is compressible in $V_g$.
    \end{claim}

    \begin{claimproof}
        Let $E$ be an essential disk in $V_g$ with boundary curve that is tight with $\crs$. If $E$ lies entirely in some $H\in \{H_i\}_{i=1}^n$, then we are done. So suppose instead that $E$ does not lie entirely within any $H\in \{H_i\}_{i=1}^n$.

        Consider the set $\annuli \cap E$. We note that this set cannot contain any circles because the annuli coming from annulus twists are incompressible by Theorem \ref{thm:twists}. 
        Thus, $\annuli \cap E$ must consist of a collection of disjoint arcs. Consider now an outermost arc $\alpha$ of $\annuli \cap E$ in $E$, and let $A$ be the annulus in $\annuli$ containing $\alpha$. We have two cases to consider.

        \textit{Case 1:} $\alpha$ is inessential in $A$. In this case, $\alpha$ has both endpoints on a single component of $\bddy A$. The set $A - \alpha$ will consist of two components, one of which will be a bigon $B$ with sides $\alpha$ and $\beta\subseteq \bddy A$.
        Notice then that $(B, \alpha, \beta) \subseteq (V_g, E, \bddy V_g)$ forms a surgery bigon for $E$. 
        Surger $E$ along $B$ to obtain $E_B$, which will have two components. The components of $E_B$ must be essential disks in $V_g$ because otherwise $E$ would not have been tight with $\crs$; in particular, we could have performed an ambient isotopy of $E$ along $E_B$, removing the arc of intersection $\alpha$. Moreover, since we chose $\alpha$ to be outermost, one of the components of of $E_B$ must be contained entirely within some $H\in \{H_i\}_{i=1}^n$; call this component $E_B'$. Since $E_B'\subseteq H$ is essential, $\bddy H$ must be compressible in $V_g$.

        \textit{Case 2:} $\alpha$ is essential in $A$. Since we chose $\alpha$ to be outermost, there is an arc $\beta\subseteq \bddy E$ sharing endpoints with $\alpha$ that must lie within some component $H\in \{H_i\}_{i=1}^n$. Then the portion of $E$ bounded by $\alpha$ and $\beta$ forms a surgery bigon $(B,\alpha,\beta)\subseteq(V_g, A, \partial V_g)$ for $A$. The disk $A_B$ resulting from surgering $A$ along $B$ can be isotoped to be contained in $H$ and must again be essential. Therefore, $\partial H$ is compressible in $V_g$.
    \end{claimproof}

    Let $\{X_j\}_{j=1}^m$ be the components of $H \cap (\bddy V_g-n(\crs))$.

    \begin{claim}\label{claim:pA-component}
        One of the components $X_k\in \{X_j\}_{j=1}^m$ in $H$ must be a pseudo-Anosov component.
    \end{claim}

    \begin{claimproof}
        Recall $H$ is a component of $V_g - n(\annuli)$, and $h$ acts as a trivial Dehn twist along any curves in $\crs - \partial \annuli$. This means that if every component $\{X_j\}$ was an identity component, then $h$ would be the identity on $H$. But by Claim \ref{claim:H-compressible}, there must be some disk $E\subseteq H$. Therefore, $E$ would be a fixed point in the action of $h$ on $\dg$ -- a contradiction since $f$, and therefore $h$, act loxodromically on $\dg$. Hence, there must be some component $X_k$ that is a pseudo-Anosov component.
    \end{claimproof}

    At this point, we have that:
    \begin{itemize}
        \item $H$ contains a pseudo-Anosov component $X_k$ by Claim \ref{claim:pA-component}, and 
        \item every component of $\partial H - n(\crs)$ is incompressible (since we assumed that every component of $\partial V_g - n(\crs)$ is incompressible).
    \end{itemize}
    Thus, Proposition \ref{prop:oertel-annular-red} applies to $h|_{H}:H\to H$. Because Proposition \ref{prop:oertel-annular-red-annulus} (2) does not apply by construction, we can conclude that $H$ itself has the structure of an $I$-bundle such that $H$ satisfies the assumptions of $J$ as described in Theorem \ref{thm:oertel-pa}. In particular, $h|_H$ is the lift of a pseudo-Anosov over some (potentially non-orientable) surface, and moreover $\hbddy H$ is the union of the pseudo-Anosov components in $\bddy H$. Additionally, as $H$ is connected by construction, each component of $\hbddy H$ is itself a pseudo-Anosov component (there are either one or two components of $\hbddy H$ depending on if the base space is a non-orientable or orientable surface).
    We also note that $\vbddy H$ is the union of some annuli, some of which may be in $\annuli$; the remaining annuli will be boundary parallel in $V_g$.

    \begin{claim}\label{claim:pa-witness}
        The pseudo-Anosov components contained in $H$ are witnesses.
    \end{claim}

    \begin{claimproof}
        Let $E$ be an essential disk in $V_g$. We note first that $E$ must intersect $H$; this follows because $H$ is compressible and is invariant under $h$ since $h$ is a pure power, so an argument similar to that given in the last paragraph of the proof of Lemma \ref{lem:compressible} shows that $H$ must be a witness.
        
        Now apply an ambient isotopy so that $\bddy E$ is tight with $\crs$. Notice that $\bddy E$ cannot be contained entirely within any single component of $\vbddy H \cap \bddy V_g$ nor $\hbddy H \cap \bddy V_g$ because each of those components is either a component of $\bddy V_g - n(\crs)$, which by assumption are incompressible in $V_g$, or is an annular neighborhood in $\bddy V_g$ of a non-annulus curve in $\crs$ which as stated earlier, cannot be disks. This means that $E$ has to cross at least one component of $\vbddy H$ and one component of $\hbddy H$.

        The intersection $E \cap \vbddy H$ must be a collection of disjoint arcs since each component of $\vbddy H$ is an incompressible annulus. Consider an arc $\alpha$ that is outermost in $E$ and let $A$ be the component of $\vbddy H$ containing $\alpha$. If $\alpha$ is essential, then we are done because this means $E$ must cross all components of $\hbddy H$.

        To see why $\alpha$ must be essential, consider the two possibilities for $A$. First, if $A$ was a boundary parallel annulus in $V_g$ and $\alpha$ were inessential, then $\alpha$ could have been removed via an ambient isotopy of $E$, contradicting the fact that $E$ was tight with $\crs$. If $A$ was not a boundary parallel annulus in $V_g$, then $A \in \annuli$, and as in the proof of Claim \ref{claim:H-compressible}, we could perform a surgery to obtain a disk $E_B$ contained within $H$; but this disk $E_B$ would have to be contained within a single component of $\hbddy H$, contradicting the fact that each component of $\hbddy H$ is incompressible. Therefore, $\alpha$ must be essential in $A$.
        
        Hence, we can conclude that $E$ must cross each component of $\hbddy H$ (of which there are at most two), meaning that it crosses the pseudo-Anosov components contained in $\bddy H$. Since $E$ was an arbitrary essential disk in $V_g$, we conclude that every disk must cross the pseudo-Anosov components in $\bddy H$, and therefore the pseudo-Anosov components must be witnesses.
    \end{claimproof}

    If follows directly from Claim \ref{claim:pa-witness} that $h$ restricts to a pseudo-Anosov on a witness.
\end{proof}

Putting together the previous two lemmas we can conclude the following.

\begin{thm}\label{thm:lox-implies-pa-handlebody}
    Suppose $f\in\hg$ acts loxodromically on $\dg$. Then $f$ is either is either pseudo-Anosov or reducible and some pure power of $f$ must restrict to a pseudo-Anosov on a witness for $\dg$.
\end{thm}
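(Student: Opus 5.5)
Apply the Nielsen--Thurston classification to $\bddy f$ and split into the three possible types. If $\bddy f$ is periodic, some power $f^n$ is isotopic to the identity on $S=\bddy V_g$, hence (since $\hg$ injects into $\operatorname{Mod}(S)$) acts trivially on $\dg$. The orbit of any disk under $f$ is then finite, so it has bounded diameter, and $f$ cannot be loxodromic. If $\bddy f$ is pseudo-Anosov, we are done immediately: $S$ itself is a witness for $\dg$, because every essential disk has boundary in $S$. The case $g\le 1$ is vacuous by Remark \ref{rem:trivial-cases}, so we may assume $g\ge 2$ as in this subsection.

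The remaining case is that $\bddy f$ is reducible with canonical reduction system $\crs$. We split according to whether some component of $S-n(\crs)$ is compressible in $V_g$.
\begin{itemize}
\item If some component $W$ is compressible, Lemma \ref{lem:compressible} applies directly. It shows that $W$ is a witness and that $f$, and hence any pure power of $f$, restricts to a pseudo-Anosov on $W$.
\item If every component of $S-n(\crs)$ is incompressible, Lemma \ref{lem:incompressible} applies. It produces a witness $W$ and a pure power of $f$ that restricts to a pseudo-Anosov on $W$.
\end{itemize}
These two subcases are exhaustive, so the reducible case is covered. Combining this with the periodic and pseudo-Anosov cases gives the theorem.

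The substantive work is already contained in Lemmas \ref{lem:compressible} and \ref{lem:incompressible}, so the main point to check is that their hypotheses match. Both lemmas require $f\in\hg$ to be loxodromic and reducible with canonical reduction system $\crs$, and the case split above is phrased in exactly those terms. Lemma \ref{lem:compressible} is stated for $f$ itself rather than a pure power; if it is needed for a pure power, note that pure powers have the same canonical reduction system and are still loxodromic, since the translation length scales by the exponent. A second minor point is that ruling out periodic $f$ uses injectivity of $\hg\to\operatorname{Mod}(S)$, so that a periodic boundary map forces the action on disks to have finite orbits.
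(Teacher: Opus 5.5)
Your proposal is correct and matches the paper's proof. Like the paper, you exclude periodic $\bddy f$, observe that $S$ itself is a witness in the pseudo-Anosov case, and split the reducible case by whether some component of $S-n(\crs)$ is compressible, invoking Lemmas \ref{lem:compressible} and \ref{lem:incompressible}. (A minor point: injectivity of $\hg\to\mcg$ is not actually needed in the periodic case, since an essential disk is determined up to isotopy by its boundary curve, so $\bddy f^n=\mathrm{id}$ already forces $f^n$ to act trivially on $\dg$.)
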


\begin{proof}
    We may consider two cases:  $\bddy f$ is reducible or pseudo-Anosov as a mapping class.
    We note that it cannot be the case that $\bddy f$ is periodic: if $\bddy f$ is periodic, then some power of $\bddy f$ is the identity, and so the action on $\dg$ cannot have positive translation length, i.e. $f$ cannot have been loxodromic.
    
    The pseudo-Anosov case is straightforward. First we note that there do exist elements of the handlebody group such that their restriction to the boundary is a pseudo-Anosov (see \cite[Section 2]{zbMATH05052470}). Additionally, if $\bddy f$ is pseudo-Anosov, then $\bddy f$ acts loxodromically on the curve graph (see Theorem \ref{thm:masur-minsky-original}) and therefore also act loxodromically on the disk graph. Finally, $S=\bddy V_g$ is a witness for $\dg$ since every disk is contained in $S$. Thus, $\bddy f$ can be a pseudo-Anosov, and in that case it is pseudo-Anosov on a witness.

    Finally, when $\bddy f$ is reducible with canonical reduction system $\crs$, then either some component of $S-n(\crs)$ is compressible or all components are incompressible. Lemmas \ref{lem:compressible} and \ref{lem:incompressible} cover each of these two cases and guarantee that in either scenario, some pure power of $f$ restricts to a pseudo-Anosov on a witness for $\dg$.
\end{proof}

\bibliographystyle{amsalpha}
\bibliography{bib}

@article{MasSchleim,
    title="The Geometry of the Disk Complex",
    author="Howard Masur and Saul Schleimer",
    journal="J. Amer. Math. Soc.",
    volume="26",
    number="1",
    pages="1--62",
    year="2013"
}

@Book{BH,
 author    = {Martin R. Bridson and André Häfliger},
 title     = {Metric Spaces of Non-Positive Curvature},
 publisher = {Springer},
 year      =  {1999},
 address   = {Berlin},
 series    = {Grundlehren der mathematischen Wissenschaften},
 volume    = {319}
}

@misc{russell2019nonrelative,
      title={The (non)-relative hyperbolicity of the separating curve graph}, 
      author={Jacob Russell and Kate M. Vokes},
      year={2019},
      eprint={1910.01051},
      archivePrefix={arXiv},
      primaryClass={math.GT}
}

@Article{chesser22,
 Author = {Chesser, Marissa},
 Title = {Stable subgroups of the genus 2 handlebody group},
 FJournal = {Algebraic \& Geometric Topology},
 Journal = {Algebr. Geom. Topol.},
 ISSN = {1472-2747},
 Volume = {22},
 Number = {2},
 Pages = {919--971},
 Year = {2022},
 Language = {English},
 DOI = {10.2140/agt.2022.22.919},
 zbMATH = {7570609}
}

@misc{kopreski2023multiarc,
      title={Multiarc and curve graphs are hierarchically hyperbolic}, 
      author={Michael Kopreski},
      year={2023},
      eprint={2311.04356},
      archivePrefix={arXiv},
      primaryClass={math.GT}
}

@Article{vokes,
 Author = {Vokes, Kate M.},
 Title = {Hierarchical hyperbolicity of graphs of multicurves},
 FJournal = {Algebraic \& Geometric Topology},
 Journal = {Algebr. Geom. Topol.},
 ISSN = {1472-2747},
 Volume = {22},
 Number = {1},
 Pages = {113--151},
 Year = {2022},
 Language = {English},
 DOI = {10.2140/agt.2022.22.113},
 zbMATH = {7518302}
}

@Article{oertel-handlebody,
 Author = {Oertel, Ulrich},
 Title = {Automorphisms of three-dimensional handlebodies},
 FJournal = {Topology},
 Journal = {Topology},
 ISSN = {0040-9383},
 Volume = {41},
 Number = {2},
 Pages = {363--410},
 Year = {2002},
 Language = {English},
 DOI = {10.1016/S0040-9383(00)00041-0},
 zbMATH = {1717412},
 Zbl = {0991.57017}
}

@Article{McCullough-twist,
 Author = {McCullough, Darryl},
 Title = {Homeomorphisms which are {Dehn} twists on the boundary},
 FJournal = {Algebraic \& Geometric Topology},
 Journal = {Algebr. Geom. Topol.},
 ISSN = {1472-2747},
 Volume = {6},
 Pages = {1331--1340},
 Year = {2006},
 Language = {English},
 DOI = {10.2140/agt.2006.6.1331},
 zbMATH = {5118562},
 Zbl = {1135.57011}
}

@Article{masur-minsky,
 Author = {Masur, H. A. and Minsky, Y. N.},
 Title = {Geometry of the complex of curves. {II}: {Hierarchical} structure},
 FJournal = {Geometric and Functional Analysis. GAFA},
 Journal = {Geom. Funct. Anal.},
 ISSN = {1016-443X},
 Volume = {10},
 Number = {4},
 Pages = {902--974},
 Year = {2000},
 Language = {English},
 DOI = {10.1007/PL00001643},
 zbMATH = {1545126},
 Zbl = {0972.32011}
}

@article{masur-minsky-I,
 author = {Masur, Howard A. and Minsky, Yair N.},
 title = {Geometry of the complex of curves. {I}: {Hyperbolicity}},
 fjournal = {Inventiones Mathematicae},
 journal = {Invent. Math.},
 issn = {0020-9910},
 volume = {138},
 number = {1},
 pages = {103--149},
 year = {1999},
 language = {English},
 doi = {10.1007/s002220050343},
 zbMATH = {1355494},
 Zbl = {0941.32012}
}

@Article{HHSII,
 Author = {Behrstock, Jason and Hagen, Mark and Sisto, Alessandro},
 Title = {Hierarchically hyperbolic spaces. {II}: {Combination} theorems and the distance formula},
 FJournal = {Pacific Journal of Mathematics},
 Journal = {Pac. J. Math.},
 ISSN = {1945-5844},
 Volume = {299},
 Number = {2},
 Pages = {257--338},
 Year = {2019},
 Language = {English},
 DOI = {10.2140/pjm.2019.299.257},
 zbMATH = {7062864},
 Zbl = {1515.20208}
}

@article{HHSI,
 author = {Behrstock, Jason and Hagen, Mark F. and Sisto, Alessandro},
 title = {Hierarchically hyperbolic spaces. {I}: {Curve} complexes for cubical groups},
 fjournal = {Geometry \& Topology},
 journal = {Geom. Topol.},
 issn = {1465-3060},
 volume = {21},
 number = {3},
 pages = {1731--1804},
 year = {2017},
 language = {English},
 doi = {10.2140/gt.2017.21.1731},
 zbMATH = {6726511},
 Zbl = {1439.20043}
}

@misc{notescomplexofcurves,
  author = {Saul Schleimer},
  title = {Notes on the complex of curves},
  note = {Last accessed 11 November 2025},
  url = {https://sschleimer.warwick.ac.uk/Maths/notes2.pdf}
}

@article{zbMATH05052470,
 author = {Carvalho, Leonardo Navarro},
 title = {Tightness and efficiency of irreducible automorphisms of handlebodies},
 fjournal = {Geometry \& Topology},
 journal = {Geom. Topol.},
 issn = {1465-3060},
 volume = {10},
 pages = {57--95},
 year = {2006},
 language = {English},
 doi = {10.2140/gt.2006.10.57},
 url = {https://eudml.org/doc/127722},
 zbMATH = {5052470},
 Zbl = {1116.57017}
}

\end{document}